\newtheorem{thm}{Theorem}
\newtheorem{lem}[thm]{Lemma}
\newtheorem{prop}[thm]{Proposition}
\newtheorem{cor}[thm]{Corollary}
\theoremstyle{definition}
\newtheorem{defi}{Definition}
\newtheorem*{rem}{Remark}
\newtheorem*{notation}{Notation}
\def\lf{\left\lfloor}   
\def\rf{\right\rfloor}
\def\lc{\left\lceil}   
\def\rc{\right\rceil}
\newcommand*\circled[1]{\tikz[baseline=(char.base)]{
            \node[shape=circle,draw,inner sep=2pt] (char) {#1};}}
\begin{document}

\title{No-feedback Card Guessing Game: Moments and distributions under the optimal strategy \vspace{-0.5em}}
\author{Tipaluck Krityakierne$^{1}$ \and Poohrich Siriputcharoen$^{1}$ \and Thotsaporn Aek Thanatipanonda$^{2}$  \and Chaloemkiat Yapolha$^{1}$}

\date{%
\vspace{-1.5em}
    \footnotesize{$^1$Department of Mathematics, Faculty of Science, Mahidol University, Bangkok, Thailand\\%
    $^2$Science Division, Mahidol University International College, Nakhon Pathom, Thailand}
}

\vspace{-1.0em}
\maketitle
\begin{abstract}
\vspace{-0.5em}
Relying on the optimal guessing strategy recently found for a no-feedback card guessing game with $k$-time riffle shuffles, we derive an exact, closed-form formula for the expected number of correct guesses and higher moments for a $1$-time shuffle case. Our approach makes use of the fast generating function based on a recurrence relation, the method of overlapping stages, and interpolation. As for $k>1$-time shuffles, we establish the expected number of correct guesses through a self-contained combinatorial proof. The proof turns out to be the answer to an open problem listed in Krityakierne and Thanatipanonda (2022), asking  for a combinatorial interpretation of a generating function object introduced therein.\vspace{0.5em}

\noindent {\bf Keywords:} card guessing; no feedback; higher-order moments; generating function; combinatorics; experimental mathematics; symbolic computation.
\end{abstract}

\section{Preliminary Discussion}

In a typical card guessing game, the player guesses the card one at a time until all cards in the deck have been guessed. 
The object of the game is to maximize the number of correct guesses. 
Depending on the rule, the player may or may not receive feedback. For the complete feedback game, in which the player sees the card after each guess, the player can adjust his strategy according to the cards he has already seen \cite{BD,KT1}. The no-feedback game is equivalent to the game that the player guesses all the cards beforehand. For a brief review on this topic, see e.g. \cite{KT2}.

The game starts with the $n$ cards ordered from 1 to $n$ before the deck is given ``riffle-shuffled'' $k$ times. The optimal guessing strategy for $k$-time shuffles with no feedback was recently established in \cite{KT2}. The leading term of the asymptotic expected number of correct guesses (for large $n$) was also found in \cite{KT2}. Building upon the optimal guessing strategy, in this work, we derive the exact, closed-form formula (in $n$) for the expectation and all higher moments for the 1-time shuffle. In order to achieve our goal for 1-time shuffle, we introduce the generating function (for each numeric $n$) that directly enumerates and counts the number of correct guesses of all possible permutations after a single shuffle in Section \ref{sec:Introduction to Card Guessing}. This is most direct, but of course slowest. We then upgrade it to fast (and faster) generating functions by setting up an appropriate recurrence relation in Section \ref{sec:fasterG}. Any versions of these generating functions can be used to compute the higher moments numerically.  Section \ref{sec:combinatorial_moments} presents a combinatorial method for calculating the moments. Although we show how to obtain a closed-form formula directly for the first two moments with the combinatorial approach, finding the expression for higher moments becomes very tedious.  Theorem \ref{Main} established in Section \ref{sec:highermoments} ensures that the formula of the higher moments can be expressed in terms of a combination of polynomials. This theorem enables us to make good use of the numeric expression from the generating function methods we developed. Using it to generate data required for interpolation, we can recover the unknown coefficients of the polynomials, and ultimately obtain the closed-form expression for all the higher moments. 

For $k>1$-time shuffles, we give a combinatorial proof for the formula of the expectation in Section  \ref{sec:k_shuffles}. This serves as a combinatorial interpretation for the generating function objects developed in \cite{KT2}, and is the answer to the open problem 1 listed in that paper. 

With the structure of the paper outlined above, our aim is to present the work that can be read from multiple points of view. 

\begin{tcolorbox}
{\bf High perspective:} demonstrate how to apply symbolic programming like Maple to rigorous mathematics research. \vspace{1em}

{\bf Medium perspective:} develop a method, step-by-step, to go from numeric to symbolic computations. \vspace{-1em}
\begin{enumerate}
\item (numeric) a slow, fast and fastest way to compute generating functions leading to the distribution of number of correct guesses for a big numeric $n$.
The three versions of generating functions have been developed in a progressive manner. The new and faster method 
is always compared to the previous method to make sure that the improved version is correct.
\item (symbolic) the fastest method for generating functions, together with the combinatorial approach and interpolation, is used to derive a closed-form formula for the moments. 
\end{enumerate}
\vspace{1em}

{\bf Specific perspective:} provide the distribution
of number of  correct guesses based on the optimal strategy.
\end{tcolorbox}

All Maple programs used to evaluate the results in this paper can be found at {\footnotesize{\url{https://thotsaporn.com/Card3.html}}}. We also provide a specific Maple command at the location where it is used throughout the paper. Look for the texts starting with {\bf$\mathtt</\hspace{-0.3em}>$}.

Let us take this opportunity to end the section with the method of overlapping stages, which will play a key role in our work when computing higher moments.

\textbf{The method of overlapping stages}

The method of overlapping stages for computing higher moments has been presented previously in \cite{Z1}.  The methodology is simple, yet very powerful. We briefly discuss it here mainly for the sake of self-containedness.

Assume a discrete random variable  $X$ can be decomposed into the sum of $n$ indicator random variables $X_1, X_2, \dots, X_n$, i.e. $X=\sum_{i=1}^n X_i$, and $X_i$ takes only value 1 or 0.

{\bf Goal:} find a formula (in $n$) of $E[X^r]$ for $r\geq1$.

Every first-year student knows how to calculate $E[X]$ from 
 \[ E[X] = \sum_i i\cdot P(X=i).  \]
 At first glance this formula above may look simple and convenient; however, when $P(X=i)$ is not available, the formula is obviously not applicable. Even if it is available, finding a closed-form expression of the sum can be very challenging (even for a binomial distribution). 

There is a better way to do this. For the first moment, the method of overlapping stages is simply the linearity of expectation: 
 \[E[X] = E[X_1+X_2+\dots+X_n] = E[X_1]+E[X_2]+\dots+E[X_n].\]
 
The higher moment can also be calculated in a similar fashion:
\[ E[X^r] = \sum_{i_1=1}^n \sum_{i_2=1}^n \dots \sum_{i_r=1}^n  E[X_{i_1}X_{i_2}\dots X_{i_r} ]. \]

For example, if $X=X_1+X_2+X_3$,
\[ E[X^2] = E[(X_1+X_2+X_3)^2]=\sum_{i=1}^3 \sum_{j=1}^3 E[X_{i}X_{j}]. \]

\newpage
 \textbf{Example 1} (Number of fixed points in the permutation)
  
 Let $X$ be a random variable of the number of fixed points in the permutation $\sigma$ of length $n$. 
 Let $\sigma_i$ be the entry in position $i$ of the permutation $\sigma$. 
 
 Then, we can decompose $\displaystyle X=\sum_{i=1}^n X_i$, where  for each $1\leq i \leq n,$
 \[ X_i = 
 \begin{cases} 1, & \text{ if } \sigma_i=i, \\ 
 0, & \text{ otherwise.} \end{cases} \]

 The first moment, i.e. the expectation, of $X_i$ is 
 \[ E[X_i] = \dfrac{(n-1)!}{n!} = \dfrac{1}{n},\]
 as we fix the position $\sigma_i=i$ and give the freedom to the other positions.
Similarly,  for any $i \neq j,$
 \[ E[X_iX_j] = \dfrac{(n-2)!}{n!} = \dfrac{1}{n(n-1)},\]
 as we fix the positions $\sigma_i=i, \sigma_j=j$ and give the other $n-2$ positions the freedom.

The expectation can be obtained by
\[  E[X] = E[X_1]+E[X_2]+\dots+E[X_n] 
= \dfrac{(n-1)!}{n!} + \dots + \dfrac{(n-1)!}{n!} = 1.\]

And for the second moment,
\begin{align*}  
E[X^2] &= E[(X_1+X_2+\dots+X_n)^2] \\
&= E[X_1^2]+E[X_2^2]+\dots+E[X_n^2]+2(E[X_1X_2]+\dots+E[X_{n-1}X_n]) \\
&= E[X_1]+E[X_2]+\dots+E[X_n]+2(E[X_1X_2]+\dots+E[X_{n-1}X_n]) \\
&= 1+2\left(\dfrac{(n-2)!}{n!} + \dots + \dfrac{(n-2)!}{n!}\right) = 1+1 = 2.
\end{align*}
Therefore, \[ Var(X) = E[X^2]-E[X]^2= 2-1^2 = 1.\]


\section{Introduction to Card Guessing}
\label{sec:Introduction to Card Guessing}
Let us start with a formal definition of a {\it riffle shuffle}.


\begin{defi}[Riffle shuffle \cite{AZ}] Gilbert-Shannon-Reeds model for riffle shuffles is performed by taking out a deck of $n$ cards initially labeled consecutively from 1 to $n$. Split the initial identity permutation into two sequences/piles (possible to have 0 cards in one of the piles) in such a way that the probability of cutting the first $t$ cards is ${\binom{n}{t}}/{2^n}$. Let $a$  ($b$) be the number of cards in the first (second) pile. To complete the riffle shuffle, interleave the cards from the two piles back in any possible way. In particular, the card from the first (second) pile will be dropped next with probability $\dfrac{a}{a+b}$ $\left(\dfrac{b}{a+b}\right)$. Since there are $\binom{n}{t}$ ways to interleave the two piles, and all these possibilities of interleaving are equally likely, each interleaving has probability ${1}/{2^n}$ to come up.
\end{defi}

Before we delve into technical details of card guessing, and even before mentioning the optimal guessing strategy, 
it is helpful to spend a moment thinking about the problem from an elementary probability perspective, e.g. a sample space, simple events, finding the mean, etc. Although simple, this concept will be served as a basis for the calculation of the $r$th moment in Section \ref{sec:combinatorial_moments}. 

{\bf Let's bring things back to basics: 1-shuffle}

{\bf - Experiment:} Suppose that a deck of $n$ cards (initially ordered from 1 to $n$) is given a riffle-shuffle once. The experiment starts from the process of splitting the deck into two piles then interleaving the piles back into a single one.  Although the identity permutation $[1,2,\dots,n]$ in the final outcome has multiplicity $n+1$, we treat each of them as different outcomes as they were split at different locations before interleaving. Thus, there are $2^n$ possible outcomes, each of these permutations (simple events) constitute the sample space of our experiment.

{\bf - Expected number of correct guesses:}  The ability to trace back to simple events allows us to use elementary concepts to find the expected number (and higher moments). For example, to find the mean, $E[X]$, all we need to do is add up the numbers of correct guesses $X(\pi_i)$ (under some guessing strategy $\cal{G}$, not necessarily optimal) of the $i$th permutation, $\pi_i$, in the sample space and then divide the sum  by the total number of $2^n$ possible outcomes: $E[X]=\dfrac{\sum_{i=1}^{2^n}X(\pi_i)}{2^n}$.

{\bf - Variance and higher moments:} Similarly, to find the $r$th moment, $E[X^r]$, we do not use the probability mass function of $X$, but instead, we average the value $[X(\pi_i)]^r$ over all permutations in the sample space: $E[X^r]=\dfrac{\sum_{i=1}^{2^n}[X(\pi_i)]^r}{2^n}$. 

{\bf - Sum of the number of correct guesses:} 
In line with $E[X]$, we use the notation $C[X]$ to denote the sum of all possible number of correct guesses (under some guessing strategy $\cal{G}$) in the sample space. That is, $C[X] = \sum_{i=1}^{2^n}X(\pi_i)$, the numerator of $E[X]$. The notation $C[X^r]$ can be defined similarly. 

We now discuss about the optimal guessing strategy for guessing the card position $i$.

\textbf{The optimal strategy $\cal{G}^*$ for card guessing with no feedback} 

For a deck of $n$ cards, the optimal strategy ${\cal G}^*$ for multiple-time shuffles ($k\geq1$), for $n$ large, was explicitly found in \cite{KT2} where one should guess the top half of the deck with sequence 
\[
\underbrace{1,\dots, 1}_{2^k-1\text{ times}}, \;\underbrace{2,\dots, 2}_{2^k\text{ times}}, \;\underbrace{3,\dots, 3}_{2^k\text{ times}}, \;\underbrace{4,\dots, 4}_{2^k\text{ times}}, \dots  
\]
and guess the bottom half in the reverse manner, i.e. 
\[
\dots, \underbrace{n-3,\dots, n-3}_{2^k\text{ times}}, \;\underbrace{n-2,\dots, n-2}_{2^k\text{ times}},\; \underbrace{n-1,\dots, n-1}_{2^k\text{ times}}, \;\underbrace{n,\dots, n}_{2^k-1\text{ times}}. 
\]

This is the unique optimal strategy for $k>1$; however, an optimal strategy for a 1-time shuffled deck is not unique. In particular,
for the card position $i$, the player can optimally choose to guess any number from the set $\mathcal{S}_i$, where
\[ \mathcal{S} =   \{1\}, \{2\}, \{2\}, \{2,3\}, \{3\}, \{3,4\}, \{4\}, \{4,5\}, \{5\}, \dots   
\;\ \;\ \;\ \;\ \;\ \;\ \;\ \;\ \text{Top half}   \]
and (in the reverse manner) \[   \dots, \{n-3,n-2\}, \{n-2\}, \{n-2,n-1\}, \{n-1\}, \{n-1\}, \{n\}   
\;\ \;\  \text{Bottom half}.  \]

While the strategy may not be optimal for small $n$, to be consistent with $k>1$, we shall stick with the following guessing strategy and refer to it as an optimal strategy ${\cal G}^*$ for the case $k=1$.
\begin{tcolorbox}
{\bf Optimal guessing strategy ${\cal G}^*$ for 1-time shuffle}
\[ 1,2,2,3,3,4,4,\dots   
\;\ \;\ \;\ \;\ \;\ \;\ \;\ \;\ \;\ \;\ \;\ \;\ \;\ \;\ \;\ \;\ \;\ \;\ \text{Top half} \]
and 
\[ \dots, n-3, n-3, n-2, n-2, n-1, n-1, n\;\ \;\ \text{Bottom half}.   \]
\end{tcolorbox}

Let us remark that the symmetry of the optimal guessing strategy ${\cal G}^*$ (mirror images of each other along the position ``half-deck'') will later play a crucial role in simplifying several calculations in this work. 

 \textbf{Example 2} (4 cards, 1-time shuffle)
Let us consider an experiment of shuffling a deck of $4$ cards once. Figure \ref{fig:sample_space} gives the complete sample space of 16 permutations. 
 \begin{figure}[h!]
    \centering
	\includegraphics[width=1\textwidth]{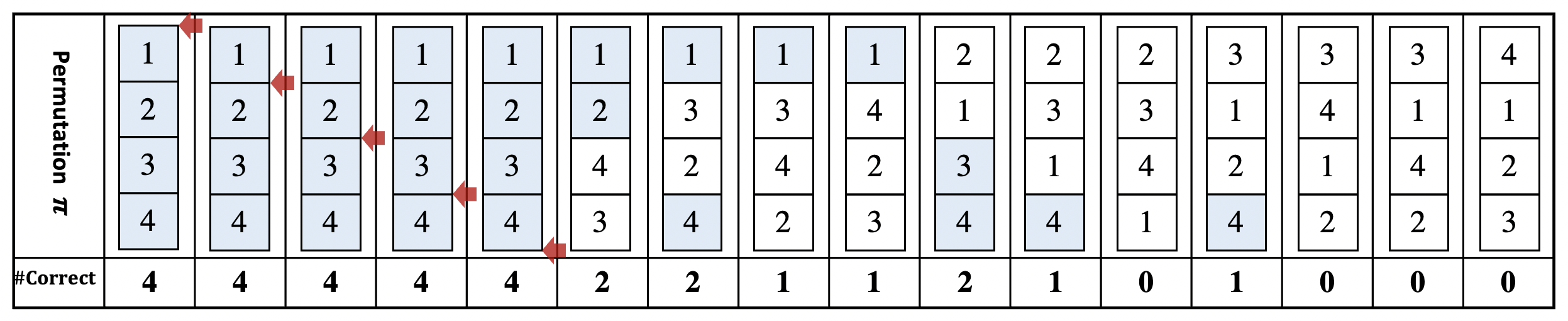} 
    \caption{Sample space for an experiment with a deck of 4 cards after one shuffle. The red arrow marks the location where the deck was split before interleaving. The blue color indicates a correct guess under the optimal strategy ${\cal{G}}^*=[1,2,3,4]$.}
    \label{fig:sample_space}
\end{figure}
 
Since there are 5 identity permutations,  in order to distinguish one permutation from another, a red arrow is used to indicate the position where the deck was split before interleaving. For example, the first identity permutation is the outcome of splitting the deck into the piles $\emptyset$ and $[1,2,3,4]$. For the other permutations, it is clear from the final permutation where the deck got split; hence, no arrow is needed. For example, the permutation $[1,3,4,2]$ is the outcome of splitting the deck into the piles $[1,2]$ and $[3,4]$ before interleaving. 

To find the expected number of correct guesses under the optimal strategy ${\cal{G}}^*=[1,2,3,4]$, we simply average the number of correct guesses from each permutation and obtain 
\[
E[X] = \frac{C[X]}{2^4}=\frac{(5\times4)+(3\times2)+(4\times1)}{16}=\frac{30}{16}=1.875.
\]

Similarly, we can compute the second moment
\[
E[X^2] = \frac{C[X^2]}{2^4}=\frac{(5\times4^2)+(3\times2^2)+(4\times1^2)}{16}=\frac{96}{16}=6.
\]

It is important to point out that in order to use the method of overlapping stages to find moments, we need to determine the number of possible permutations whose $X_i=1$, $X_iX_j=1$, etc.
(which is one of the goals of this work presented in Section \ref{sec:highermoments}).
Before we move on to the next topic, let us use this simple example to demonstrate the method to find the numerators $C[X]$ and $C[X^2]$:
\begin{align*}  
C[X] &= C[(X_1+X_2+X_3+X_4)] \\
&= 9+6+6+9=30.\\
C[X^2] &= C[(X_1+X_2+X_3+X_4)^2] \\
&= C[X_1^2]+C[X_2^2]+C[X_3^2]+C[X_4^2] \\
&+2(C[X_1X_2]+C[X_1X_3]+C[X_1X_4]+C[X_2X_3]+C[X_2X_4]+C[X_3X_4]) \\
&= \left(9+6+6+9\right)+2(6+5+6+5+5+6)=96.
\end{align*}

\newpage
{\bf The generating function by enumeration: \circled{I} A direct but slow way $\spadesuit$}

With the optimal strategy ${\cal G}^*$ in mind, the (counting) generating function can be built to keep track of the number of correct guesses for each of the $2^n$ permutations of an $n$ card deck, after one riffle shuffle: 
\begin{equation}
\label{eq:slow}
 F_{n}(q) = \sum_{i=0}^{n}a_iq^i, 
 \end{equation}
where $a_i$ denotes the number of permutations 
with $i$ correct guesses under the optimal strategy $\cal{G}^*$.

Computing the generating function using \eqref{eq:slow} would require an exhaustive list of every possible permutation before we can count the number of correct guesses one by one, which becomes very slow whenever $n$ gets large. For example, we could compute only up to $n=15$ cards for a single shuffle.
We note that, $F_{n}(1)=2^n$, the total number of possible permutations.
 
Continuing with the previous example in Figure \ref{fig:sample_space}, the generating function is
\[
F_{4}(q)
=4+4q+3q^2+5q^4.
\]

Observe that 
$C[X]=F'_{4}(q)\vert_{q=1}=30$ and $C[X^2]=\left[qF'_{4}(q)\right]'\vert_{q=1}=96$.

{\bf$\mathtt</\hspace{-0.3em}>$}Maple command for this empirical (slow) approach is {\bf\texttt{GenSlow(n,q)}}.\\
For example, try:
{\bf\texttt{GenSlow(12,q);}}

\section {Fast and faster computation of generating function for a single shuffle \label{sec:fasterG}}

\subsection{Fast and faster generating functions}
In this section, we will consider only a single shuffle ($k = 1$). This will later provide a basis for the general case ($k>1$) which is much more involved and will be discussed in detail in a subsequent section.

It can be seen from the previous example that the direct computation of generating function even for one shuffle can be time consuming. 
Luckily, there is a faster way to obtain the generating function using a recurrence relation. We first define the two sequences produced by a riffle shuffle formally, as they will play a key role in our work.  

\begin{defi}[The two sequences produced by a riffle shuffle] 
\label{def:increasing_seq}
We will call the two sequences obtained by splitting the initial identity permutation the {\it first} and {\it second sequences}, and denote them by 
$A$ and $B$, respectively.  Note that the two sequences can be specified immediately after splitting the deck, and before interleaving the cards from the two piles.
\end{defi}

From the above definition, we can see that the two sequences always take the form $A=[1,2,\dots,s-1]$ and $B=[s,s+1,\dots,n]$ for some $1\leq s\leq n+1$ (of course, when $s=1$, $A=\emptyset$, and similarly, when $s=n+1$, $B=\emptyset$).

Revisit the example in Figure \ref{fig:sample_space}. The sequences $A$ and $B$ of the second permutation are $A=[1]$, and $B=[2,3,4]$ while for the last permutation, they are  $A=[1,2,3]$, and $B=[4]$.

\textbf{Top half of the deck}

Because of  the symmetry of the optimal guessing strategy, we will first concentrate on the top half of the deck and derive a recurrence relation for it.

\begin{notation}
Let $h=\lc \dfrac{n}{2} \rc$ be the length of the half deck. Let $a_1,a_2\geq0$ be the length of the first and second sequences in the {\bf top half} of the permutation $\pi$, i.e. $a_1+a_2 = h$. Let $s$ be the starting number of the second sequence in the top half.
\end{notation}

\begin{defi}
Shuffle an $n$-card deck once. For fixed parameter values $a_1, a_2, s$, let $\Pi\left[a_1,a_2,s\right]$ be the set containing all permutations whose top half satisfies these parameters.
We denote by $G(a_1,a_2,s,q)$ a generating function that counts the number of correct guesses in the top half for permutations in $\Pi\left[a_1,a_2,s\right]$.
\end{defi}

\textbf{Example 3} Let $a_1=3, a_2=2$ and $s=7$. Then, the length of the top half is $h=a_1+a_2=5$, and $\Pi\left[3,2,7\right]$ contains those permutations whose top half consisting of card numbers  $1,2,3,7,8$ as shown in Figure \ref{fig:exampleTophalf}. There are total of $\binom{3+2}{3} = 10$ such half-permutations, each of which has its own number of correct guesses according to the optimal strategy $[1,2,2,3,3]$ at the first five positions.
Thus, $G(3,2,7,q) = 2q+4q^2+4q^3$.  

\begin{figure}[h!]
    \centering
\includegraphics[width=0.8\textwidth]{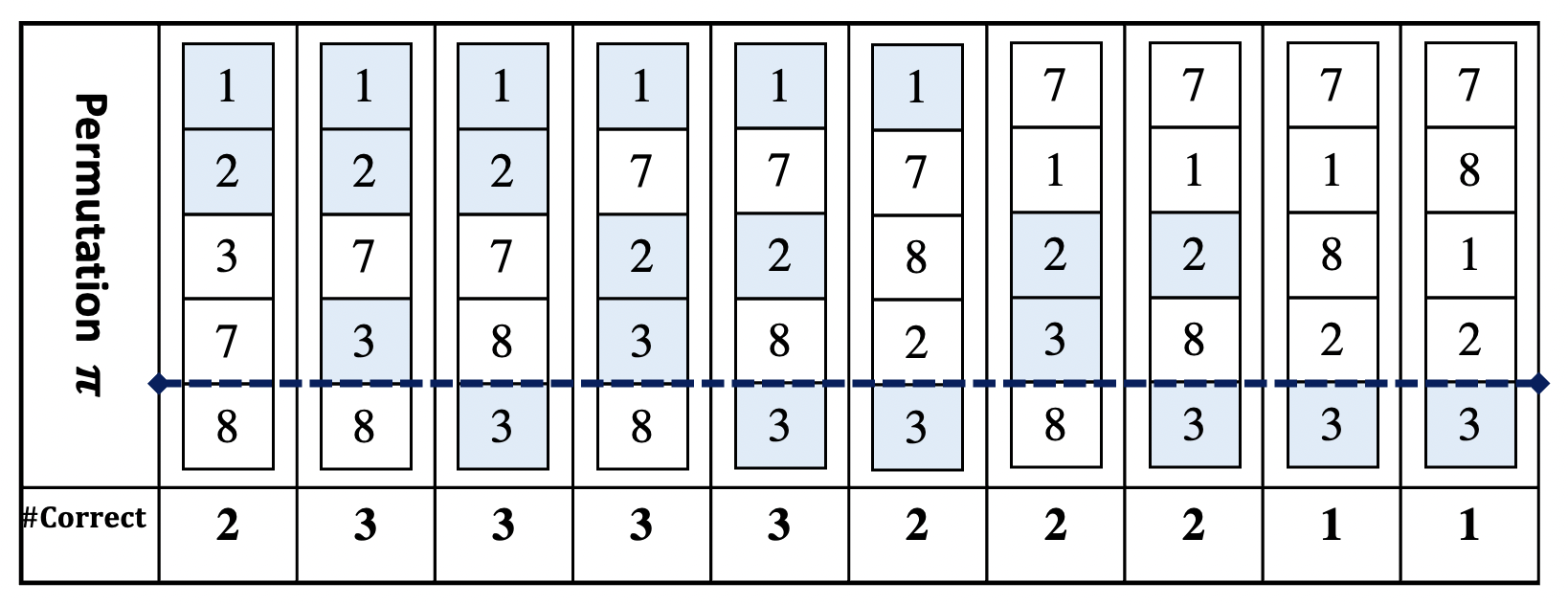}  
    \caption{Top-half permutations with parameters $a_1=3, a_2=2$ and $s=7$}
    \label{fig:exampleTophalf}
\end{figure}

\textbf{A recurrence relation for the top-half deck}

We clearly see from Figure \ref{fig:exampleTophalf} that the number of correct guesses of the top five cards is equal to the number of correct guesses of the top four cards plus 1 if the fifth card matches the best guess ``3'', or plus zero, otherwise. Thus, we can define a recurrence relation for the top-half deck: 
\[ G(3,2,7,q) = q^{1}G(2,2,7,q)+q^{0}G(3,1,7,q). \]

Generalizing the concept, a recurrence relation can be obtained by comparing the last position of the half-permutation (which will either end with the entry of the first sequence or
that of the second sequence) with the best guess $c$ of that position under the optimal strategy $\cal{G}^*$. 

We state this formally in the following proposition which also suggests a way to obtain a fast calculation method for recursively defining the generating function $G$.

\begin{prop} Let $c= \lf\dfrac{a_1+a_2}{2}\rf+1$. Then,  $c$ is the optimal guess at the position ($a_1+a_2$)th for a 1-time shuffled deck under $\cal{G}^*$, and 
\begin{equation} 
\label{fastG}
G(a_1,a_2,s,q) = q^{\delta(c,a_1)}G(a_1-1,a_2,s,q)+q^{\delta\left(c,s+a_2-1\right)}G(a_1,a_2-1,s,q),  
\end{equation}
where $G(0,0,s,q) = 1$.  $\delta(c,d)$ is the Kronecker delta function, taking value $1$ if $c=d$, and $0$ otherwise.
\end{prop}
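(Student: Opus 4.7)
The proof has two parts: verifying the explicit formula for $c$, and establishing the recurrence.

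For the first part, I would simply match $c=\lfloor (a_1+a_2)/2\rfloor+1$ against the explicit pattern of $\mathcal{G}^*$ on the top half. The strategy reads $1,2,2,3,3,4,4,\ldots$, so the value $j+1$ (for $j\ge 1$) appears precisely at positions $2j$ and $2j+1$, while value $1$ sits alone at position $1$. A two-case split on the parity of $m:=a_1+a_2$ confirms this is exactly $\lfloor m/2\rfloor+1$. Since $a_1+a_2=h$, position $m$ lies in the top half, so $c$ is the $\mathcal{G}^*$-guess there.

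For the second part, I would use a ``peel off the last position'' decomposition. Interpret $G(a_1,a_2,s,q)$ in the natural way for all $a_1,a_2\ge 0$, as the generating function over interleavings of $[1,\ldots,a_1]$ with $[s,\ldots,s+a_2-1]$ scored against the first $a_1+a_2$ symbols of $\mathcal{G}^*$. Any such interleaving must end in either $a_1$ or $s+a_2-1$, because the shuffle preserves the relative order within each of the two piles: the last card of the interleaving must be the last card of one of the two sequences. Partition $\Pi[a_1,a_2,s]$ into these two classes.

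Removing the last entry gives bijections from these two classes to the interleavings counted by $G(a_1-1,a_2,s,q)$ and $G(a_1,a_2-1,s,q)$, respectively. The key observation is that $\mathcal{G}^*$ is defined position-wise, independently of $a_1,a_2,s$, so the optimal guesses at positions $1,\ldots,m-1$ are identical in the reduced and original problems. Hence each reduced $G$ correctly tracks the correct-guess count on the first $m-1$ positions. Multiplying by the indicator $q^{\delta(c,a_1)}$ (respectively $q^{\delta(c,s+a_2-1)}$) accounts for the possible extra correct guess at position $m$, and summing the two branches yields the recurrence. The base case $G(0,0,s,q)=1$ records the unique empty interleaving with no guesses.

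No substantive obstacle arises; the only point requiring care is the positional invariance of the scoring—that the reduced subproblem is scored by the very same $\mathcal{G}^*$-symbols in the same slots—which follows directly from the position-only definition of $\mathcal{G}^*$ on the top half.
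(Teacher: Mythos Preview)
Your proof is correct and follows the same ``peel off the last position'' argument the paper uses (stated informally just before the proposition via the example in Figure~\ref{fig:exampleTophalf}). One minor slip: you write ``since $a_1+a_2=h$'' when justifying that position $m$ lies in the top half, but in the recursion $a_1+a_2$ ranges over $0,\ldots,h$; the point you need is simply $a_1+a_2\le h$, which holds throughout.
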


It is clear from the proposition that the approach is general and applicable to other guessing strategies.

\textbf{Assembly the top-half generating function $G$ to the full generating function}

To get the full generating function, i.e. the number of correct guesses of the whole permutation, we multiply the generating function of the top half with that of the bottom half.

\begin{prop} Shuffle an $n$-card deck once, and let $\pi$ be a resulting permutation.  Denote by  $h= \lc\dfrac{n}{2}\rc$ the length of the top half of $\pi$. Let $a$ and $b$ be  the length of the first sequence in the top half, and the length of the second sequence in the bottom half of  $\pi$, respectively.
Then, the generating function of $\pi$ is 
\begin{equation}
\label{eq: fullG}
F(a,b,h,q):=G(a,h-a,a+(n-h-b)+1,q)\cdot G(b, n-h-b, b+(h-a)+1,q).
\end{equation}
\end{prop}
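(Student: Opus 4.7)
The plan is to establish the product formula by first decomposing the class of permutations with parameters $(a,b,h)$ into independent top-half and bottom-half interleavings, and then using the symmetry of $\mathcal{G}^*$ to recognize the bottom-half factor as an instance of $G$.

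For a fixed triple $(a,b,h)$, a permutation in the class is specified by an independent pair of interleavings: one in the top $h$ positions, merging the first-sequence entries $[1,\ldots,a]$ with the second-sequence entries $[s,\ldots,s+(h-a)-1]$ where $s = a + (n-h-b) + 1$; and one in the bottom $n-h$ positions, merging the leftover first-sequence entries $[a+1,\ldots,s-1]$ with the leftover second-sequence entries $[s+h-a,\ldots,n]$. Because the number of correct guesses under $\mathcal{G}^*$ is the sum of correct guesses in the top half and in the bottom half, and the two interleavings can be chosen independently, the class generating function factors into a product of the two half-deck generating functions. The top-half factor coincides with $G(a,h-a,a+(n-h-b)+1,q)$ directly from the definition of $G$.

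The remaining task is to identify the bottom-half factor with $G(b, n-h-b, b+(h-a)+1, q)$. Since $G$ is defined only for top halves evaluated against the pattern $[1,2,2,3,3,\ldots]$, I would invoke the ``mirror-image'' symmetry of $\mathcal{G}^*$ via the involution $\tau$ sending position $i \mapsto n+1-i$ and value $j \mapsto n+1-j$; this involution preserves $\mathcal{G}^*$ pointwise and swaps the two halves. Applying $\tau$ to the bottom half of $\pi$ yields a sequence of length $n-h$ with the same per-position match count, but now evaluated against the top-half portion of $\mathcal{G}^*$. Tracking $\tau$ through the two subsequences, the original second-sequence entries $[s+h-a,\ldots,n]$ of length $b$ are relabeled and reversed to form the new ``first sequence'' $[1,\ldots,b]$, and the original first-sequence entries $[a+1,\ldots,s-1]$ of length $n-h-b$ become the new ``second sequence'' starting at $n+2-s = b+(h-a)+1$. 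This identifies the bottom-half factor with $G(b,\, n-h-b,\, b+(h-a)+1,\, q)$ and completes the factorization.

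The main obstacle I anticipate is the simultaneous bookkeeping for $\tau$: verifying that it really yields a match-preserving bijection from bottom-half configurations of the length-$n$ deck (under the bottom-half portion of $\mathcal{G}^*$) onto top-half configurations of a length-$(n-h)$ deck (under the top-half pattern), with precisely the claimed parameters. This requires checking the two subsequence lengths, their start values after relabeling, and the per-position strategy compatibility all at once, and paying attention to the parity of $n-h$ when truncating the standard top-half pattern to this length.
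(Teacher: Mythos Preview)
Your proposal is correct and follows essentially the same approach as the paper: factor into top and bottom halves, identify the top-half factor with $G$ directly, and for the bottom half apply the involution $j\mapsto n+1-j$ on values together with position reversal so that the bottom-half guessing problem becomes a top-half one with parameters $(b,\,n-h-b,\,b+(h-a)+1)$. The paper's proof is terser (it simply states the relabel-and-reverse map and gives one numerical example), whereas you spell out the independence of the two interleavings and the parameter bookkeeping more carefully; your worry about the parity of $n-h$ is not a real obstacle, since the reversed bottom-half pattern of $\mathcal{G}^*$ is exactly $[1,2,2,3,3,\dots]$ truncated to length $n-h$ regardless of parity.
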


\begin{proof}
The first and second $G$'s in \eqref{eq: fullG} are used to track the number of correct guesses in the top and bottom half of $\pi$. The first $G$ follows from the discussion above. The parameters used in the second $G$ can be explained as follows. First, we mirror each entry of the bottom half about $n$, i.e. each entry $e$ in the bottom half is relabelled to $n+1-e$. Then, the entries are reversed, the first entry becoming the last, and the last entry becoming the first.
For example, the sequence $[4,11,5,12,6,13,14]$ in the bottom half with $n=14$ is rewritten to $[1,2,9,3,10,4,11]$. Moreover, the original optimal guessing strategy for the bottom half $[11,11,12,12,13,13,14]$ will be rewritten to $[1,2,2,3,3,4,4]$. 
The relabelling allows us to use the same function $G$ defined previously for the top half with the bottom half, and \eqref{eq: fullG} follows immediately.
\end{proof}

{\bf \circled{II} A fast approach for computing the generating function $\spadesuit\spadesuit$}

Summing over all possible values of parameters $a$ and $b$, we obtain a fast approach for computing the generating function for the number of correct guesses of all permutations after one shuffle. 

\begin{cor}
\label{cor: fullFn}
Let $F_n(q)$ be the generating function for the number of correct guesses of all permutations after one shuffle.  Then,
\begin{equation}   
\label{eq:fullFn_fast}
F_n(q) = \sum_{a=0}^h \sum_{b=0}^{n-h} G(a,h-a,a+(n-h-b)+1,q)\cdot G(b, n-h-b, b+(h-a)+1,q).  
\end{equation}
\end{cor}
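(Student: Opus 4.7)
The plan is to unpack the definition $F_n(q) = \sum_\pi q^{X(\pi)}$ summed over the $2^n$ simple outcomes of a single riffle shuffle, partition these outcomes according to the parameters $(a,b)$ appearing in the corollary, and apply the previous proposition to each block.

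First I would observe that every outcome is uniquely determined by a split point $s \in \{1,2,\dots,n+1\}$ together with a specific interleaving of the two resulting piles $A=[1,\dots,s-1]$ and $B=[s,\dots,n]$. Denoting by $a$ the number of $A$-cards lying in the top half (of length $h=\lceil n/2\rceil$) and by $b$ the number of $B$-cards lying in the bottom half, the split point is forced by the balance $a+(n-h-b) = s-1$, equivalently $s = a+(n-h-b)+1$. Thus $(a,b) \mapsto s$ is a well-defined map from $\{0,\dots,h\}\times\{0,\dots,n-h\}$ into $\{1,\dots,n+1\}$, and each such $(a,b)$ corresponds to a nonempty family of outcomes (i.e.\ interleavings). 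I would check that this indexing exhausts the sample space: summing the multiplicities $\binom{h}{a}\binom{n-h}{b}$ over the rectangle equals $2^h\cdot 2^{n-h}=2^n$ by the Vandermonde identity, confirming that each simple outcome is counted exactly once.

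Second, I would group the contributions to $F_n(q)$ by $(a,b)$. For a fixed $(a,b)$, the interleaving of the top half (namely the first $a$ entries of $A$ with the first $h-a$ entries of $B$) and the interleaving of the bottom half (the last $s-1-a = n-h-b$ entries of $A$ with the last $(n-s+1)-(h-a)=b$ entries of $B$) are chosen independently. Since $X(\pi)$ is additive over the two halves, $q^{X(\pi)}$ factors as $q^{X_{\text{top}}}\cdot q^{X_{\text{bot}}}$, so the sum over interleavings factors as a product of two generating functions. The top factor is $G(a,h-a,s,q)$ directly from the definition, and the bottom factor is $G(b,n-h-b,b+(h-a)+1,q)$ by the relabelling argument already justified in the proof of the preceding proposition. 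Their product is exactly $F(a,b,h,q)$, and summing over all admissible $(a,b)$ yields the stated formula.

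I do not expect a serious obstacle: everything reduces to a careful bookkeeping argument. The one place to be attentive is verifying that the rectangular index set $\{0,\dots,h\}\times\{0,\dots,n-h\}$ is precisely the right range, with no boundary outcomes missed or double-counted; this is handled by the bijection above and the Vandermonde check. Everything else is a direct consequence of the additivity of $X(\pi)$ over the two halves together with the previous proposition.
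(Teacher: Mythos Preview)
Your proposal is correct and follows essentially the same route as the paper, which treats the corollary as an immediate consequence of the preceding proposition obtained by ``summing over all possible values of parameters $a$ and $b$''; you simply supply the bookkeeping details (the bijection between outcomes and pairs $(a,b)$, and the cardinality check) that the paper leaves implicit. One cosmetic point: the identity $\sum_{a=0}^{h}\sum_{b=0}^{n-h}\binom{h}{a}\binom{n-h}{b}=2^{h}\cdot 2^{n-h}=2^{n}$ is just the binomial theorem applied to each factor separately, not Vandermonde.
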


The fact that the generating function defined in Corollary \ref{cor: fullFn} is a multiplication of two recursive functions allows us to compute the generating function $F_n(q)$ for as large as $n=1000$. Figure \ref{fig:histogram} displays histograms for the distribution of the number of correct guesses for several values of $n$.

\begin{figure}[h!]
    \centering
    \includegraphics[width=0.8\textwidth]{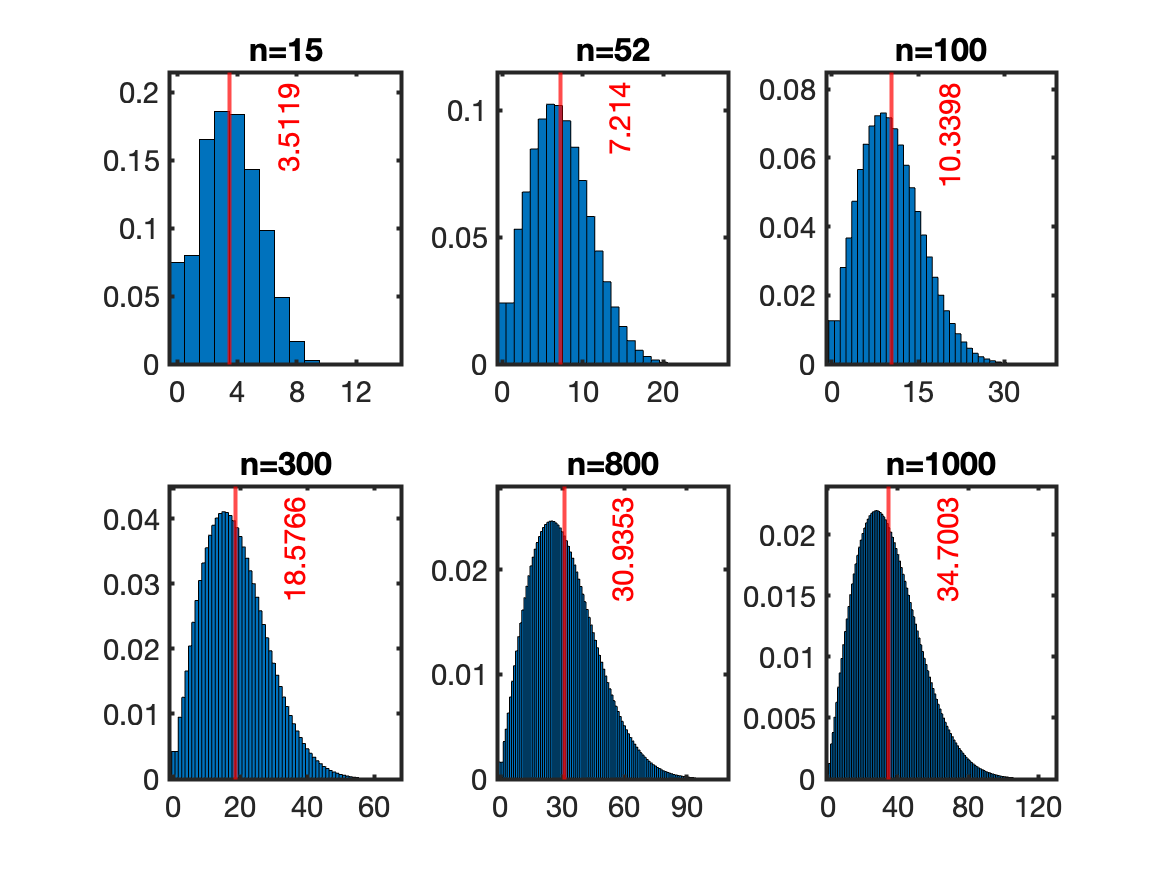}  
    \caption{Probability histograms of the number of correct guesses when $n$ varies. The red vertical line indicates the corresponding expected value $E[X]$.}
    \label{fig:histogram}
\end{figure}

It is clear that these distributions are not normal. We will discuss about the $r$th moment in the subsequent sections.

{\bf Fastest computation of generating function, finally!}

By modifying the recurrence relation of the half-generating function $G$ in \eqref{fastG}, we achieve a recurrence that can be used to compute very efficiently the generating function. Realizing the fact that there
are only two ways that the correct guesses in the top half can come from the second sequence,
we will ignore them first, and only make some modification 
at the very end of the calculation.
Thus, our new recurrence will keep track of only those correct guesses from
the first sequence. 

The two ways that the correct guesses in the top half come from the second sequence
are   

\[  [ {\color{blue}1} | {\color{red} 2},3,4,\dots,h] 
\text{ and } [\emptyset | { \color{red} 1,2},3,4,\dots,h], \]
where the vertical bar is used to separate the first and second sequences. The blue and red colors indicate the correct guesses from the first and second sequences, respectively.

To achieve an even faster calculation, the recurrence of $G$ is set as
\[ G(a_1,a_2,q) = q^{\delta(c,a_1)}G(a_1-1,a_2,q)+G(a_1,a_2-1,q),  \]
with the same base case $G(0,0,q) = 1$, and $\delta(c,d)$ is the Kronecker delta function.

As we do not have to take into account correct guesses from the second sequence, we dropped the variable $s$. This makes the computations very fast.

{\bf  \circled{III} Fastest approach $\spadesuit\spadesuit\spadesuit$}

The next step is to combine the top-half generating function (the correct guesses from the first sequence) with the bottom-half generating function (the correct guesses from the second sequence) by multiplying the two generating functions and summing over all possible values of $a$ and $b$. The resulting generating function is still different from $F_n(q)$ in \eqref{eq:fullFn_fast} due to the 4 missing permutations. Thus, we simply add back the generating functions corresponding to the number of correct guesses from those permutations that involve either (1) the second sequence in the top half:
\[  [ {\color{blue}1} | {\color{red} 2},3,4,\dots,h] 
\text{ and } [\emptyset | { \color{red} 1,2},3,4,\dots,h], \]
or (2) the first sequence in the bottom half:
\[  [ h+1,h+2,\dots,{\color{red}n-1}|{\color{blue}n}] 
\text{ and } [ h+1,h+2,\dots,{\color{red}n-1,n}|\emptyset]. \]

Each of these four half-permutations leads to the same full-deck identity permutation, $[1,2,3,\dots,n]$, each contributing 4 number of correct guesses (i.e. $q^4$).

The new and fastest calculation for the generating function is therefore
\begin{align}   
\label{fastestF}
F_n(q) &=-2q^2-2q^3+4q^4+ \sum_{a=0}^h \sum_{b=0}^{n-h} G(a,h-a,q)\cdot G(b, n-h-b,q)  \notag\\
&=-2q^2-2q^3+4q^4+ \sum_{a=0}^h G(a,h-a,q) \cdot \sum_{b=0}^{n-h} G(b, n-h-b,q),
\end{align}
where the first two terms compensate for double counting parts of each of these four permutations. For example, since the object  $[ {\color{blue}1} | {\color{red} 2},3,4,\dots,h]$ is equivalent to $[ {\color{blue}1} | {\color{red} 2},3,4,\dots,h,h+1,\dots,{\color{blue}n-1,n}]$, and those blue correct guesses have been counted already in the double series, we thus subtract $q^3$ from the final generating function.

Compared to the first two versions, this last method could generate $F_n(q)$ for $n=1000$ within 11.4 seconds, and will be used as the computation method for the moments in the following sections.

{\bf$\mathtt</\hspace{-0.3em}>$}Maple command for the fast and fastest approach are {\bf\texttt{GenFast(n,q)}} and {\bf\texttt{GenFastest(n,q)}}.\\
For example, try:
{\bf\texttt{GenFast(120,q);}} and
{\bf\texttt{GenFastest(120,q);}}

\subsection{Numerical moments from generating functions}
\label{sec:numericMoment}

For a fixed value of $r\geq0$, we can use any versions of the generating function $F_n(q)$ in \eqref{eq:slow}, \eqref{eq:fullFn_fast}, or \eqref{fastestF} to get a numeric $r$th moment for the number of correct guesses. Let $X$ be the number of correct guesses. Then,
\begin{equation}
\label{eq:numericMoment}
C[X^r]=D^{(r)}F_n(q)\vert_{q=1}, 
\end{equation}
where the operator $Dt(q):=qt'(q)$ and $D^{(r)}$ means we repeatedly apply the operator $D$, $r$ times.

By noticing that the expression $F_n(q)$ in \eqref{fastestF} can be partitioned into three parts: 

\[  
F_n(q) =\underbrace{-2q^2-2q^3+4q^4}_{\text{excess}}+
\underbrace{\sum_{a=0}^h G(a,h-a,q)}_{:=F_A(q)} \cdot
\underbrace{\sum_{b=0}^{n-h} G(b, n-h-b,q)}_{:=F_B(q)}.
\]

we can get more out of this fastest version. In particular, we obtain the following results.

\begin{cor}
\label{cor:CY_A,CZ_B} 
Let $Y_A$ be the number of correct guesses in the top half from the first sequence and $Z_B$ the number of correct guesses in the bottom half from the second sequence, respectively. Then,
\begin{align}
C[Y^r_A]&=2^{n-h}\cdot D^{(r)}F_A(q)\vert_{q=1}\\
C[Z^r_B]&=2^h\cdot D^{(r)}F_B(q)\vert_{q=1}, 
\end{align}
where $F_A(q)=\sum_{a=0}^h G(a,h-a,q)$ and $F_B(q)=\sum_{b=0}^{n-h} G(b, n-h-b,q)$. 
\end{cor}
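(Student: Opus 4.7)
The plan is to interpret $F_A(q)$ combinatorially as a top-half counting generating function and then invoke the standard link between the differential operator $D = q\,d/dq$ and moments.

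First I would unpack the meaning of $F_A(q)$. By the discussion in the subsection on the fastest approach, the modified recurrence $G(a_1,a_2,q) = q^{\delta(c,a_1)} G(a_1-1,a_2,q) + G(a_1,a_2-1,q)$ counts, for a fixed split parameter leading to a first-pile of length $a_1$ and a second-pile contribution of length $a_2$ in the top half, the number of interleavings of these two piles weighted by $q^{(\text{correct guesses from the first sequence})}$. Crucially, whether a position $p$ in the top half contributes a correct guess from the first sequence depends only on which top-half positions receive first-pile cards and on the optimal guess $c_p$; it does not depend on the starting value $s$ of the second sequence. This legitimizes the dropping of $s$. Summing $G(a,h-a,q)$ over $a\in\{0,\dots,h\}$ then produces $F_A(q) = \sum_{i\ge 0} \alpha_i q^i$, where $\alpha_i$ is the total number of top-half interleaving patterns yielding exactly $i$ correct guesses from the first sequence.

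Next I would establish the multiplicity factor. Fix any top-half interleaving pattern with $a$ first-pile positions. For it to arise from an actual riffle shuffle of the full deck, the split point $s$ must satisfy $s-1 \ge a$ and $n-s+1 \ge h-a$, and after $s$ is chosen the bottom half is determined by any interleaving of the remaining $s-1-a$ first-pile cards with the remaining $n-s+1-(h-a)$ second-pile cards, giving $\binom{n-h}{s-1-a}$ options. Summing over admissible $s$ yields
\[
\sum_{j=0}^{n-h}\binom{n-h}{j} = 2^{n-h}
\]
full permutations in the $2^n$-element sample space, each sharing the same top half and hence the same value of $Y_A$. Therefore
\[
C[Y_A^r] \;=\; \sum_{i\ge 0} \alpha_i \cdot 2^{n-h} \cdot i^r \;=\; 2^{n-h} \sum_{i\ge 0} i^r \alpha_i.
\]

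Finally, I would invoke the elementary operator identity $D^{(r)} q^i\big|_{q=1} = i^r$, which extends by linearity to $D^{(r)} F_A(q)\big|_{q=1} = \sum_i i^r \alpha_i$, yielding the first equation. The second equation follows by the mirror-symmetry argument already used in the paper to recast the bottom half into the same form as the top half: each bottom-half interleaving extends to exactly $2^h$ full permutations, and $Z_B$ is constant on each such fiber, giving $C[Z_B^r] = 2^h \cdot D^{(r)}F_B(q)|_{q=1}$.

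The only delicate step is the multiplicity count, since the sample-space convention (distinguishing the $n+1$ identity-producing splits) must be handled consistently: one has to check that summing over admissible $s$ exactly reproduces the factor $2^{n-h}$, with no double counting or omission at the boundary cases $a=0$ or $a=h$. Once that accounting is verified, the rest of the argument is a one-line application of the $D$-operator identity.
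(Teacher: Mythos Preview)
Your proposal is correct and follows essentially the same line of reasoning the paper leaves implicit: the paper simply states the corollary as an immediate consequence of the factorization $F_n(q) = (\text{excess}) + F_A(q)\cdot F_B(q)$ without spelling out the multiplicity argument, whereas you make that argument explicit by showing each top-half interleaving pattern extends to exactly $2^{n-h}$ full sample-space outcomes and then applying the $D$-operator identity. Your verification of the multiplicity count (summing $\binom{n-h}{s-1-a}$ over admissible split points) is exactly what underlies the paper's unproved assertion, so the two approaches coincide.
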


Moreover, the relation \eqref{fastestF} also implies the independence between the number of correct guesses from the first sequence in the top and those from the second sequence in the bottom halves of the deck. We state this in the next corollary.

\begin{cor}
\label{cor:independent}
$Y_A$ and $Z_B$ are  independent of each other.
\end{cor}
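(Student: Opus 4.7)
My plan is to strengthen the univariate identity \eqref{fastestF} to a bivariate factorization. Define
\[
H(q_1,q_2) \,:=\, \sum_\pi q_1^{Y_A(\pi)}\,q_2^{Z_B(\pi)}
\]
with $\pi$ ranging over the $2^n$ permutations in the sample space. Independence of $Y_A$ and $Z_B$ is equivalent to the probability generating function $2^{-n}H(q_1,q_2)$ factoring as the product of its marginals, so the whole task reduces to proving
\[
H(q_1,q_2) \,=\, F_A(q_1)\cdot F_B(q_2).
\]
Once this is in hand, setting $q_2=1$ and then $q_1=1$ gives $H(q_1,1)=2^{n-h}F_A(q_1)$ and $H(1,q_2)=2^h F_B(q_2)$, and the relation $H(q_1,1)\cdot H(1,q_2)=2^n\cdot H(q_1,q_2)$ is exactly the independence statement.

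To establish the factorization I would rerun the decomposition that produced \eqref{fastestF}, but this time with $Y_A$ and $Z_B$ tracked by separate variables. Every shuffle $\pi$ is uniquely encoded by a quadruple $(a,\sigma_T,b,\sigma_B)$, where $a\in\{0,\dots,h\}$ counts first-sequence cards in the top half, $\sigma_T$ is one of the $\binom{h}{a}$ interleavings placing those cards in the top half, and $(b,\sigma_B)$ plays the symmetric role for the bottom; the count $\sum_a\binom{h}{a}\cdot\sum_b\binom{n-h}{b}=2^n$ confirms this is a bijection. The key observation is that the first-sequence cards placed in the top half are always $\{1,2,\dots,a\}$ regardless of $b$, so $Y_A$ is a function of $(a,\sigma_T)$ alone; symmetrically, the second-sequence cards in the bottom are always $\{n-b+1,\dots,n\}$ regardless of $a$, so $Z_B$ depends only on $(b,\sigma_B)$. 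The bivariate sum therefore separates as a product, and the two resulting inner sums match the definitions of $F_A$ and $F_B$ from Section \ref{sec:fasterG}, delivering the desired factorization.

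The main subtlety — and the reason the corollary is not merely a restatement of \eqref{fastestF} — is that no excess correction is needed in the bivariate setting, in sharp contrast to \eqref{fastestF}. The corrective terms $-2q^2-2q^3+4q^4$ in \eqref{fastestF} exist precisely to repair for correct guesses made by the \emph{wrong} sequence (a second-sequence card being guessed correctly in the top half, or a first-sequence card in the bottom) among the $n+1$ sample-space encodings of the identity permutation. Since $Y_A$ and $Z_B$ are by design blind to exactly those cross-sequence hits, there is nothing to repair in the bivariate count. The one calculation I expect actually to carry out in the proof is an explicit check on those $n+1$ identity encodings: computing their joint contribution to $H(q_1,q_2)$ and matching it against the corresponding coefficients of $F_A(q_1)\cdot F_B(q_2)$, thereby certifying that dropping the excess term is consistent rather than accidental.
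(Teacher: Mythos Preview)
Your argument is correct and rests on the same combinatorial decomposition that the paper uses to derive \eqref{fastestF}: the quadruple $(a,\sigma_T,b,\sigma_B)$ parametrizes the sample space, and $Y_A$, $Z_B$ depend on the two disjoint halves of that data. The paper's own justification is a single sentence pointing at \eqref{fastestF}, so your bivariate formulation is in fact \emph{more} careful than the original --- you are right that the univariate factorization of $F_n(q)$ (with its excess term) does not by itself force independence, and that the clean bivariate identity $H(q_1,q_2)=F_A(q_1)F_B(q_2)$ is what is really needed; your explanation of why the excess correction vanishes here is a genuine clarification the paper omits.
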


\section{A combinatorial approach for calculating the $r$th moment for a one-time shuffled deck\label{sec:combinatorial_moments}}

\subsection{Three-level procedure for finding the $r$th moment}

Again, we let $X$ be a random variable representing the number of correct guesses of the whole deck after one shuffle. Our ultimate goal is to find a closed-form formula for $E[X^r]$, the $r$th moment of $X$, as a function in $n$ based on the optimal guessing strategy $\cal{G}^*$. While it is possible to achieve this by differentiating both sides of the relation \eqref{fastestF}, the calculations of closed-form expression becomes very involved. In this section, we will establish a systematic calculation framework for the  $r$th moment from the combinatorial point of view instead. 

Using the same notation as before, we let $n$ denote the number of cards, and $h= \lc n/2 \rc$ the number of cards in the top half of the deck. Moreover, since the deck is given a riffle-shuffle once, it will produce two increasing sequences according to Definition \ref{def:increasing_seq}. Recall that we called these two sequences $A$ and $B$, respectively.

Let $a$ be the length of $A$ in the top half, and $b$ the length of $B$ in the bottom half.
Since $A$ will always start with number 1, obviously $A$ consists of the numbers $[1,2,\dots,a]$ in the top half. Similarly, $B$ consists of the numbers $[n-b+1,n-b+2,\dots,n]$ in the bottom half.

As we have already seen, due to symmetry we can focus solely on the number of correct guesses in top half of the deck. 
Moreover, since there are only two permutations that the correct guesses in the top half could come from the second sequence, we again ignore them for the moment and only focus on those correct guesses from the first sequence, $A$. 

We now present the three-level procedure, a combinatorial approach for calculating the $r$th moment about the origin (raw moment). 

\begin{tcolorbox}
\textbf{High Level:} Let $Y_A$ be the number of correct guesses in the top half from the first sequence and $Z_B$ the number of correct guesses in the bottom half from the second sequence, respectively. Then, 
\begin{equation} 
E[X^r] =  E[ (Y_A+Z_B)^r]+ \frac{e(r)}{2^n},
\end{equation}
where
\begin{equation}E[ (Y_A+Z_B)^r] = \sum_{i=0}^r \binom{r}{i} E[Y_A^i]\cdot E[Z_B^{r-i}],  
\end{equation}
and $e(r)$ is given by \eqref{eq:f(q)}.
\end{tcolorbox}

The last equality holds because $ E[Y_A^{i}Z_B^{r-i}] = E[Y_A^{i}]E[Z_B^{r-i}]$ for the independent random variables $Y_A$ and $Z_B$ (see Corollary \ref{cor:independent}). The next proposition tells us how to obtain the term  $e(r)$ explicitly.

\begin{prop}
\label{prop:excess}
Let $e(r)=2^n\left(E[X^r]-E[ (Y_A+Z_B)^r]\right)$ be the excess term, which collects leftover contributions from the second sequence in the top half ($Y_B$), the first sequence in the bottom half ($Z_A$), and the cross-terms.

Then,
\begin{equation}
\label{eq:f(q)}
e(r)=4\cdot4^r-2(3^r+2^r).
\end{equation}
\end{prop}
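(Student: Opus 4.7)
The plan is to read $e(r)$ directly off the fastest generating-function identity \eqref{fastestF},
\[
F_n(q) \;=\; -2q^2 - 2q^3 + 4q^4 + F_A(q)\, F_B(q),
\]
rather than tracking $Y_B$, $Z_A$, and their cross-terms combinatorially. Starting from $2^n E[X^r] = C[X^r] = D^{(r)} F_n(q)\bigr|_{q=1}$ via \eqref{eq:numericMoment}, the task reduces to computing $D^{(r)}$ of each of the two summands separately and then evaluating at $q=1$.

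For the product $F_A(q)\, F_B(q) = \sum_{a,b} G(a,h-a,q)\, G(b,n-h-b,q)$, each choice of $(a,b)$ with $0 \le a \le h$ and $0 \le b \le n-h$ pins down the split point $s = a + (n-h-b) + 1$ and hence the identities of the first and second sequences; every top-half interleaving then pairs with every bottom-half interleaving to form a legitimate permutation, so each of the $2^n$ permutations arises in exactly one such way. Because $Y_A$ depends only on the top-half interleaving and $Z_B$ only on the bottom-half interleaving (Corollary \ref{cor:independent}), the coefficient of $q^k$ in the product equals the number of permutations $\pi$ with $Y_A(\pi) + Z_B(\pi) = k$. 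Therefore
\[
D^{(r)}\bigl(F_A(q)\, F_B(q)\bigr)\Bigr|_{q=1} \;=\; \sum_{\pi} \bigl(Y_A(\pi) + Z_B(\pi)\bigr)^r \;=\; 2^n E\bigl[(Y_A+Z_B)^r\bigr].
\]

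The leftover polynomial $-2q^2 - 2q^3 + 4q^4$ is handled by the elementary identity $D^{(r)}(c\, q^k)\bigr|_{q=1} = c\, k^r$, yielding
\[
D^{(r)}\!\bigl(-2q^2 - 2q^3 + 4q^4\bigr)\Bigr|_{q=1} \;=\; -2 \cdot 2^r - 2 \cdot 3^r + 4 \cdot 4^r.
\]
Combining the two pieces in the definition $e(r) = 2^n E[X^r] - 2^n E[(Y_A+Z_B)^r]$ produces the claimed expression $e(r) = 4 \cdot 4^r - 2(3^r + 2^r)$. The only delicate step is the bijection between permutations and pairs of top/bottom interleavings parameterized by $(a,b)$, but this is essentially already embedded in the derivation of \eqref{fastestF}; once it is in hand, the rest is a mechanical application of the operator $D$.
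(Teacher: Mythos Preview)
Your proof is correct and follows essentially the same approach as the paper: both read $e(r)$ off the excess polynomial $f(q)=4q^4-2(q^3+q^2)$ in \eqref{fastestF} and compute $D^{(r)}f(q)\vert_{q=1}$. The paper's argument is a two-line appeal to \eqref{fastestF} and the operator $D$, whereas you additionally spell out why $D^{(r)}\bigl(F_A(q)F_B(q)\bigr)\vert_{q=1}=2^n E[(Y_A+Z_B)^r]$, which the paper leaves implicit in the discussion surrounding Corollaries~\ref{cor:CY_A,CZ_B} and~\ref{cor:independent}.
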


\begin{proof}
We recall the excess term of the generating function in \eqref{fastestF} and the relationship between the generating function and the $r$th moment. In particular, let $f(q):=4q^4-2(q^3+q^2)$ and define the operator $D t(q):=qt'(q)$. Then, the result follows immediately as
\[
e(r) = D^{(r)}f(q)\vert_{q=1}=4\cdot4^r-2(3^r+2^r).
\]
\end{proof}

\textbf{Example: }  $e(1)=6$ and $e(2)=38$.

Due to the direct connection $E[Y_A^r] = \dfrac{C[Y_A^r]}{2^n}$, we will present the procedure for the Middle and Low Levels in terms of $C[Y_A^r]$. 
\begin{tcolorbox}
\textbf{Middle Level:} Decompose $Y_A$ into a sum of indicator random variables:  
\[
 Y_A=Y_1^A+Y_2^A+\dots+Y_h^A,
\]
where $Y_i^A=1$ if the $i$th position is guessed correctly and comes from the first sequence, and $0$ otherwise.  \vspace{1em}

The $r$th moment of $Y_A$ can be found by
\begin{align}
\label{eq:C_A}
C[Y_A^r] &= C[(Y_1+Y_2+\dots+Y_h)^r]  \notag\\
&= \sum_{i_1=1}^h \sum_{i_2=1}^h \dots \sum_{i_r=1}^h C[Y_{i_1}Y_{i_2}\dots Y_{i_r}],
\end{align}
where for simplicity, we have suppressed the superscript``$A$'' on $Y_i$. We will continue to suppress such superscripts as their omission causes no confusion.
\end{tcolorbox}

As we can see, the method of overlapping stages in the Middle Level leads us to finding the formula for $C[Y_{i_1}Y_{i_2}\dots Y_{i_r}]$ in \eqref{eq:C_A}. Its formula when $1\leq i_1 < i_2 < \dots < i_r \leq h$  is given by \eqref{eq:low} below. 

\begin{tcolorbox}
\textbf{Low Level, the Building Block $C[Y_{i_1}Y_{i_2}\dots Y_{i_r}]$}  

\vspace{1em}
For $1\leq i_1 < i_2 < \dots < i_r \leq h$,
\begin{align} 
\label{eq:low}
&C[Y_{i_1}Y_{i_2}\dots Y_{i_r}] \notag\\
&=  2^n 
\binom{i_1-1}{\lf i_1/2\rf} \cdot \binom{i_2-i_1-1}{\lf i_2/2\rf- \lf i_1/2 \rf -1}
\dots \binom{i_r-i_{r-1}-1}{\lf i_r/2\rf- \lf i_{r-1}/2 \rf -1} \cdot \dfrac{1}{2^{i_r}}.
\end{align}
\end{tcolorbox}

\begin{rem} We only need the formula $C[Y_{i_1}Y_{i_2}\dots Y_{i_r}]$ for all distinct indices. If, for example, $i_1=i_2$, then $C[Y_{i_1}^2Y_{i_3}Y_{i_4}\dots Y_{i_r}]=C[Y_{i_1}Y_{i_3}Y_{i_4}\dots Y_{i_r}]$, and so the formula for $r-1$ distinct indices applies. 
\end{rem}

{\bf Justifying the formula for $C[Y_i]$}

When $r=1$,  $C[Y_i]$ corresponds to the number of ways the fixed position $i$ coming from the first sequence is guessed correctly.  The above formula is simplified to \eqref{eq:C_A_i}, for which we shall justify now using a combinatorial counting method.  

For $1 \leq i \leq h$, 
\begin{equation}
\label{eq:C_A_i}
 C[Y_i] =  \sum_{a=0}^h \sum_{b=0}^{n-h} 
\binom{i-1}{\lf i/2\rf}\cdot \binom{h-i}{a-\lf i/2\rf-1}\cdot \binom{n-h}{b}
= 2^n \binom{i-1}{\lf i/2\rf}\cdot \dfrac{1}{2^i}. 
\end{equation}

For a fixed $i$, $Y_i$ is the indicator variable indicating the correctness of the guess at the $i$th position and it must come from the first sequence. Figure \ref{fig:explaining_CY1} provides a visual explanation for the formula of $C[Y_i]$ in \eqref{eq:C_A_i}.

 \begin{figure}[h!]
    \centering
	\includegraphics[width=1\textwidth]{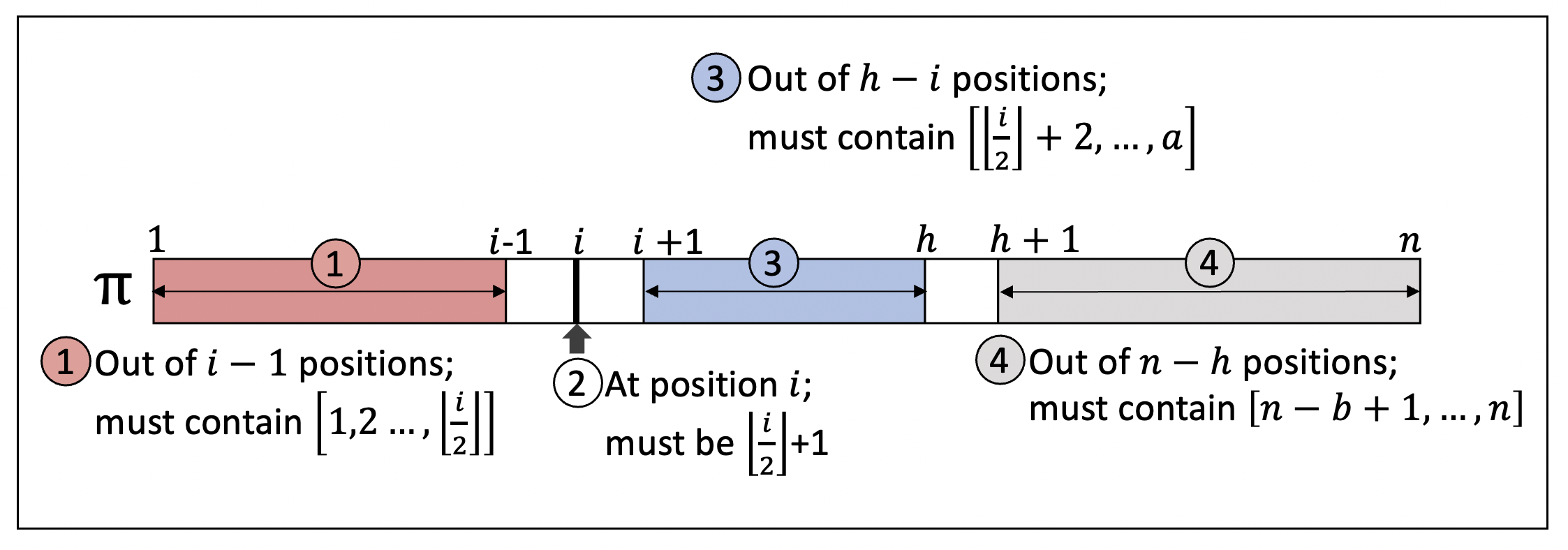} 
    \caption{Structure of $\pi$ for computing $C[Y_i]$ in \eqref{eq:C_A_i}}
    \label{fig:explaining_CY1}
\end{figure}

First, let us fix the values of $a$ (the length of $A$ in the top half) and $b$ (the length of $B$ in the bottom half). Then, in order to have the guess at position $i$ correctly, the permutation $\pi$ must be such that $\pi[i]=\lf i/2\rf+1$ (i.e. correct guess) and in addition this guess must come from $A$. This implies that top half of $\pi$ (of length $h$) can be divided into two blocks with the $i$th position as a separator. This is precisely \circled{1} and \circled{3} of  Figure \ref{fig:explaining_CY1}. As for the bottom half, there are $n-h$ positions left and these positions must contain the last $b$ elements of $B$ as illustrated by \circled{4} in the figure. The desired formula for $C[Y_i]$ in \eqref{eq:C_A_i} can be obtained by summing over all possible values of $a$ and $b$.

This idea can be generalized to justify the formula for $C[Y_{i_1}Y_{i_2}\dots Y_{i_r}]$, for $r\geq2$. For reference, we give a formula for $r=2$, which corresponds to the number of ways to obtain the correct guesses at two different positions $i$ and $j$ from the first sequence.

For $1 \leq i < j \leq h$,
\begin{align} C[Y_i Y_j]  &= \sum_{a=0}^h \sum_{b=0}^{n-h}
\label{eq:C_A_ij}
\binom{i-1}{\lf i/2\rf}\cdot \binom{j-i-1}{\lf j/2\rf-\lf i/2\rf-1}
\cdot \binom{h-j}{a-\lf j/2\rf-1}\cdot \binom{n-h}{b} \notag\\
&=  2^n \binom{i-1}{\lf i/2\rf}\cdot \binom{j-i-1}{\lf j/2\rf-\lf i/2\rf-1} \cdot \dfrac{1}{2^j}. 
\end{align}


\subsection{Closed-form expression for the first moment, $E[X]$\label{sec:closed-form-EX}}
Starting from the Low Level formula  $C[Y_i]$ and proceeding backwards, we provide detailed calculations for $\sum_{i=1}^h C[Y_i]$ of the Middle Level, and eventually for $E[X]$ of the High Level. 


\textbf{Middle Level:}
Equations \eqref{eq:C_A} and  \eqref{eq:C_A_i} for $r=1$ lead to
\[ C[Y_A] = \sum_{i=1}^h C[Y_i] = 2^n \sum_{i=1}^h \binom{i-1}{\lf i/2\rf} \dfrac{ 1 }{2^i},\]
which can be evaluated in closed form as we shall now demonstrate. In order to get rid of the floor function, we consider the following two cases.

Case 1:  $h$ is even.  Assume $h = 2L.$ Then,
\begin{align*} C[Y_A] &= 2^n \sum_{s=1}^{L}  \binom{2s-1}{s}\dfrac{1}{4^s}
+2^n \sum_{s=0}^{L-1}  \binom{2s}{s} \dfrac{1 }{2\cdot 4^s}
= 2^n \left[  \sum_{s=0}^{L-1} \binom{2s}{s} \dfrac{ 1}{4^s} 
+ \dfrac{1}{2}\binom{2L}{L}\dfrac{1}{4^L} -\dfrac{1}{2} \right]\\
&= 2^n \left[  \dfrac{2L+1/2}{4^L}\binom{2L}{L}-\dfrac{1}{2} \right].
\end{align*}

Case 2:  $h$ is odd.  Assume $h = 2L-1.$ Then,
\begin{align*}  C[Y_A] &= 2^n \sum_{s=1}^{L-1} \binom{2s-1}{s} \dfrac{ 1}{4^s}
+2^n \sum_{s=0}^{L-1} \binom{2s}{s} \dfrac{ 1 }{2\cdot 4^s}
= 2^n \left[ \sum_{s=0}^{L-1}  \binom{2s}{s}\dfrac{ 1}{4^s} -\dfrac{1}{2} \right]\\
&= 2^n \left[  \dfrac{2L}{4^L}\binom{2L}{L}-\dfrac{1}{2} \right].
\end{align*}

In both of these cases, we have used the identity 
\[  \sum_{s=0}^{L-1} \binom{2s}{s} \dfrac{ 1 }{ 4^s} = \dfrac{2L}{4^L}\binom{2L}{L}. \]

\textbf{High Level:} By the linearity 
\[  E[Y_A+Z_B] = E[Y_A]+E[Z_B],\]
and so 
\[ E[X] = E[Y_A+Z_B]+ \dfrac{6}{2^n},\]
where the term $ \dfrac{6}{2^n}$ is the excess term as derived in Proposition \ref{prop:excess}.

The formula of $E[X]$ therefore depends on the parity of $h$ and $n-h$, and we obtain the following theorem.
\begin{thm}
Let  $L = \lc h/2 \rc$ and $\alpha \in \{-1,0,1,2\}$. For $n=4L+\alpha,$
\begin{equation}
E[X] = (n+1-\alpha/2)\binom{2L}{L} \dfrac{1}{4^{L}} -1+\dfrac{6}{2^n}.
\end{equation}
\end{thm}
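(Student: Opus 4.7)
The plan is to compute $E[X]$ by piecing together three ingredients: the closed forms for $C[Y_A]$ already derived in Cases 1 and 2 above, a symmetric counterpart $C[Z_B]$, and the excess term from Proposition \ref{prop:excess}. By linearity of expectation and the $r=1$ case of that proposition,
\[
E[X] \;=\; E[Y_A] + E[Z_B] + \frac{6}{2^n}.
\]
The symmetry of the optimal strategy ${\cal G}^*$ about the midpoint of the deck means that $C[Z_B]$ obeys the same formula as $C[Y_A]$ with $h$ replaced by $n-h$, so the two parity cases already derived for $Y_A$ apply verbatim to $Z_B$ on the other half.

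The core of the argument is a case analysis on the parities of $h$ and $n-h$, which partition $n \pmod 4$ into the four scenarios corresponding to $\alpha \in \{-1,0,1,2\}$. In the two cases where $h$ is even ($\alpha = 0$ or $\alpha = -1$), the Case-1 parameter for the top half already coincides with the $L$ appearing in the theorem, and the two closed forms combine directly. In the remaining two cases ($\alpha = 1, 2$), $h = 2L+1$ or $n-h = 2L+1$ is odd, so the Case-2 parameter for that half is $L+1$ rather than $L$; to re-express the contribution in terms of $L$ I will apply the central binomial identity
\[
\binom{2L+2}{L+1}\frac{1}{4^{L+1}} \;=\; \frac{2L+1}{2(L+1)} \binom{2L}{L}\frac{1}{4^L}.
\]
A short algebraic simplification should then show that in each of the four cases, $E[Y_A] + E[Z_B]$ collapses to
\[
\left(n + 1 - \frac{\alpha}{2}\right) \binom{2L}{L} \frac{1}{4^L} - 1,
\]
after which adding $6/2^n$ gives the claimed formula.

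The main obstacle will be the bookkeeping in the two odd-half cases: after invoking the central binomial identity, one must verify that the coefficient of $\binom{2L}{L}/4^L$ comes out to exactly $n+1-\alpha/2$, uniformly across all four sub-cases. Once this re-parameterization is handled cleanly, the remaining steps are routine simplifications built on top of the Case-1 and Case-2 closed forms already established in Section \ref{sec:closed-form-EX}.
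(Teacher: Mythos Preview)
Your proposal is correct and follows essentially the same approach as the paper: combine the two parity cases for $C[Y_A]$ with the symmetric formula for $C[Z_B]$, add the excess term $6/2^n$, and split into four cases according to the parities of $h$ and $n-h$. The paper in fact stops short of writing out the four-case verification, simply asserting the theorem after noting that the formula depends on those parities; your plan to spell out the cases explicitly, including the central binomial identity needed when a half has length $2L+1$, is exactly the routine bookkeeping the paper omits.
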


This theorem provides an exact, closed-form formula for $E[X]$. The leading term of $E[X]$ was derived earlier in \cite{C,KT2}: \[  E[X] = \dfrac{2\sqrt{n}}{\sqrt{\pi}}+\mathcal{O}(1).\] 


\subsection{Closed-form expression for the second moment, $E[X^2]$}

Recalling the Low Level formula \eqref{eq:C_A_ij} 
\[ C[Y_i Y_j] = 
2^n \binom{i-1}{\lf i/2\rf}\cdot \binom{j-i-1}{\lf j/2\rf-\lf i/2\rf-1} \cdot \dfrac{1}{2^j},
\]
for $1 \leq i < j \leq h$, we now give detailed calculations for $E[X^2]$ for the case when $h$ is even.

\textbf{Middle Level:}

To obtain $C[Y^2_A]$, we evaluate the summation
$ \sum_{i=1}^h \sum_{j=i+1}^h C[Y_iY_j] $ in \eqref{eq:C_A}
to the closed form after getting rid of the floor function.
We start from the inner sum.

Case 1: $h=2L$ and $i=2s$
\begin{align*} 
\sum_{j=i+1}^h  \binom{j-i-1}{\lf j/2\rf- \lf i/2 \rf -1} \dfrac{1}{2^j}
&= \dfrac{(2L-2s+3/2)}{4^L}\binom{2L-2s}{L-s}-\dfrac{3}{2\cdot 4^s} .
\end{align*}

Case 2: $h=2L$ and $i=2s-1$
\[
\sum_{j=i+1}^h \binom{j-i-1}{\lf j/2\rf- \lf i/2 \rf -1} \dfrac{1}{2^j}
= \dfrac{(4L-4s+2)}{4^L}\binom{2L-2s}{L-s}-\dfrac{1}{4^s}.\]

Hence, for $h=2L,$
\begin{align}
\dfrac{1}{2^n}\sum_{i=1}^h \sum_{j=i+1}^h C[Y_iY_j] &= 
\sum_{i=1}^h \binom{i-1}{\lf i/2\rf} \cdot \sum_{j=i+1}^h \binom{j-i-1}{\lf j/2\rf- \lf i/2 \rf -1} \dfrac{1}{2^j} \notag\\
&= (L+3/2)-\binom{2L}{L}\dfrac{(3L+3/2)}{4^L}. 
\label{eq:formula_r2}
\end{align}

Thus, we obtain the second moment of $Y$ for $h=2L$:
\begin{align*}
C[Y^2_A] &= C[Y_A]+2\sum_{i=1}^h \sum_{j=i+1}^h C[Y_iY_j]  \\
&=  \dfrac{2^n}{4^L}(2L+1/2)\binom{2L}{L}-\dfrac{2^n}{2}
+2^{n+1}\left((L+3/2)-\binom{2L}{L}\dfrac{(3L+3/2)}{4^L}\right) \\
&= 2^n\left[ (2L+\dfrac{5}{2})- \dfrac{1}{4^L}\binom{2L}{L}(4L+5/2)\right].
\end{align*}

\textbf{High Level: Wrap it up!}
In this final step, we show how to obtain $E[X^2]$ for the case $n =4L$. 

\begin{align*} 
E[(Y_A+Z_B)^2] &= E[Y^2_A]+E[Z_B^2]+2 E[Y_A]E[Z_B] \\
&=   4L+5- \dfrac{1}{4^L}\binom{2L}{L}(8L+5)
+\dfrac{2}{16^L} \left((2L+1/2)\binom{2L}{L}-\dfrac{4^L}{2}\right)^2.
\end{align*}
Then,
\[E[X^2] =E[ (Y_A+Z_B)^2]+\dfrac{38}{2^n}, \]
where  $38$ is the excess constant term $e(2)$ given in Proposition \ref{prop:excess}.

For other cases as well as higher moment $E[X^r]$, to avoid tedious computation, we shall adopt the interpolation method to recover a closed-form formula in the next section.

\subsection{The higher moment $E[X^r]$ and the main theorem\label{sec:highermoments}}

The reader may have noticed that the bottleneck of the procedure is the derivation of a closed-form expression for $\sum C[Y_{i_1}Y_{i_2}\dots Y_{i_r}]$ in the Middle Level, which is extremely tedious even for the second moment as we have to prove the formula using the case analysis due to the floor function.
The simplicity of the formulas obtained for the first and second moments, however, suggests us that there might be a certain form for higher moments.
We state this formally in Theorem \ref{Main}.

\begin{thm} \label{Main}
Let  $L = \lc h/2 \rc$. Then, 
\begin{equation} 
\label{eq:sum_CYr}
\sum_{1\leq i_1 < i_2 < \dots < i_r \leq h}C[Y_{i_1}Y_{i_2}\dots Y_{i_r}]
= 2^n \left[ P(L)\binom{2L}{L}\cdot\dfrac{1}{4^L}+Q(L) \right],  
\end{equation} 
for some polynomials $P(L)$ and $Q(L)$ with degrees no more than $\lc r/2 \rc$
and $\lf r/2 \rf$, respectively.
\end{thm}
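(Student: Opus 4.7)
The plan is to establish the formula by induction on $r$, evaluating the $r$-fold sum from the innermost index $i_r$ outward. The base case $r=1$ is already worked out in Section \ref{sec:closed-form-EX}: there we obtained $P(L)=2L+1/2$ or $P(L)=2L$ (depending on the parity of $h$) and $Q(L)=-1/2$, both satisfying the claimed degree bounds $\deg P\le 1$ and $\deg Q\le 0$.

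For the inductive step I would first absorb the floor functions by partitioning the sum into $2^r$ parity sub-sums indexed by $(\epsilon_1,\dots,\epsilon_r)\in\{0,1\}^r$, where $i_k=2s_k-\epsilon_k$. In each sub-sum, the spacing binomials $\binom{i_k-i_{k-1}-1}{\lf i_k/2\rf-\lf i_{k-1}/2\rf-1}$ turn into near-central binomial coefficients of the form $\binom{2(s_k-s_{k-1})+c_k}{s_k-s_{k-1}+c_k'}$ with small integer shifts $c_k,c_k'\in\{-2,-1,0\}$, and the weight $1/2^{i_r}$ becomes $1/4^{s_r}$ up to a constant. The inner sum over $s_r$, carried out already for $r=2$ in equation \eqref{eq:formula_r2}, can then be closed in each parity case, producing a ``head'' piece of the shape $\binom{2L-2s_{r-1}}{L-s_{r-1}}\,p_1(L,s_{r-1})/4^L$ plus a ``tail'' piece of the shape $q_1(s_{r-1})/4^{s_{r-1}}$ for some polynomials $p_1,q_1$ of low degree.

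Iterating the summation from $s_{r-1}$ down to $s_1$, I would repeatedly invoke two workhorse identities. The first is the polynomial-weighted version of the telescoping identity used in Section \ref{sec:closed-form-EX},
\[
\sum_{s=0}^{L-1}\binom{2s}{s}\frac{p(s)}{4^s}=\frac{\binom{2L}{L}}{4^L}\,\widetilde{p}(L)+\widetilde{q}(L),
\]
which feeds off the tail pieces and contributes to both $P$ and $Q$; the second is the weighted central-binomial convolution
\[
\sum_{s=0}^{L}\binom{2s}{s}\binom{2L-2s}{L-s}\,p(s)=4^L\,\widehat{p}(L),
\]
which feeds off the head pieces and contributes only to $P$. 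Each application raises the degree in $L$ by at most one, and each step of the iteration converts a head/tail pair at level $k$ into a new head/tail pair at level $k-1$.

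The main obstacle, and the real content of the theorem, is the degree bookkeeping. One must show that across the $r$ successive reductions at most $\lc r/2\rc$ applications contribute a degree to $P$ and at most $\lf r/2\rf$ applications contribute a degree to $Q$; the uneven split reflects that the very first reduction (which emits the outermost $\binom{2L-2s_{r-1}}{L-s_{r-1}}$) is exclusively a head production, after which the two channels alternate. Verifying this accounting uniformly across all $2^r$ parity cases, and checking that cross-interactions between head and tail pieces never inflate the degree bound, is the delicate part of the argument. Once settled by a routine but tedious induction, the desired form $P(L)\binom{2L}{L}/4^L+Q(L)$ with the stated degree bounds follows after reassembling the $2^r$ parity contributions, as the parity choices at the absorbed inner indices affect only the numerical coefficients of $P$ and $Q$, not their degrees.
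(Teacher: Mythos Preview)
Your route and the paper's diverge after the common $2^r$ parity decomposition. Where you propose iterated direct summation---peeling off $s_r,s_{r-1},\dots$ and tracking ``head'' and ``tail'' pieces through two workhorse identities---the paper instead passes to the generating function $F^{(r)}(y)=\sum_{L\ge 0}(\text{sum})\,y^L$ and evaluates each parity case in closed form using the three identities of Proposition~\ref{prop:three_identities}. The crucial advantage is that degree control becomes structural rather than iterative: Lemma~\ref{lem:partial} shows every parity case contributes a term $\dfrac{y^{A_i}(1-\sqrt{1-y})^{B_i}}{2^r(1-y)^{1+r/2}}$ with $A_i\in\{0,1\}$ and $B_i\le r+1$, and the partial-fraction split $\dfrac{P(y)}{(1-y)^{1+r/2}}+\dfrac{Q(y)}{(1-y)^{(1+r)/2}}$ with $\deg P\le\lc r/2\rc$, $\deg Q\le\lf r/2\rf$ is then immediate algebra. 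Proposition~\ref{prop:expansion} extracts the coefficient of $y^L$ and finishes.

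The step you flag as ``delicate'' is a genuine gap, not a formality, and your heuristic for it is incorrect. The very first inner summation over $i_r$ does \emph{not} produce only a head: already in the $r=2$ computation (the displayed Cases~1--2 preceding \eqref{eq:formula_r2}) the inner sum yields both a head $\dfrac{(\cdots)}{4^L}\binom{2L-2s}{L-s}$ and a tail $\dfrac{c}{4^s}$. At every subsequent stage the tail term, fed back into the remaining binomial chain, regenerates both a new head and a new tail, so the two channels do not ``alternate'' in the way you describe. Without a correct invariant governing how degrees accumulate through these cross-interactions, the bounds $\lc r/2\rc$ and $\lf r/2\rf$ remain unproved in your framework. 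This is exactly the bookkeeping the paper's generating-function method bypasses: there the pole order $1+r/2$ and the numerator degree are read off directly from counting applications of $(A[0])$, $(A[1])$, $(A[-1])$, with no iterative tracking required.
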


\begin{rem}
Let us note that for each $r$, the expression on the right hand side of \eqref{eq:sum_CYr} depends on the parity of $h$. For example, for $r=2$ and $h=2L$, the previous result in \eqref{eq:formula_r2} implies that $P(L)=-(3L+3/2)$ and $Q(L)=L+3/2$.
\end{rem}

The proof of the theorem is quite technical and eight pages long. Despite having its own merits, we defer it to Appendix so as not to interrupt the flow of the~paper.

To shorten the notation, 
we denote $M_r(h) := \sum C[Y_{i_1}Y_{i_2}\dots Y_{i_r}]$.

Recall that $Y_A = Y_1+Y_2+\dots+Y_h$, and so the expression of $C[Y^r_A]$ can be written in terms of $M_r(h)$ as follows:
\begin{align*}
C[Y_A] &=   M_1(h), \\
C[Y^2_A] &=   M_1(h)+2M_2(h), \\
C[Y^3_A] &=   M_1(h)+6M_2(h)+6M_3(h). 
\end{align*}
In general, we obtain the following proposition by a counting method.

\begin{prop}
Let $M_r(h) = \sum_{1\leq i_1 < i_2 < \dots < i_r \leq h} C[Y_{i_1}Y_{i_2}\dots Y_{i_r}]$ for $r\geq1$. Then,
\begin{equation}  
\label{eq:integer_partition}
C[Y^r_A] =  \sum_{P \in Par(r)} 
\dfrac{r!}{(p_1!)^{m_1}(p_2!)^{m_2}\dots (p_s!)^{m_s}}
\dfrac{m!}{m_1!m_2!\dots m_s!} M_m(h), 
\end{equation}  
where each partition $P$ of size $r$ is such that $P= p_1^{(m_1)}+p_2^{(m_2)}+\dots+p_s^{(m_s)}.$ With this notation, $m_i$ is the number of multiples of $p_i$ in partition $P$. 
Hence, $m$ is the number of parts of partition $P$, i.e. $m=m_1+m_2+\dots+m_s$.
\end{prop}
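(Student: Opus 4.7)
The plan is to expand $Y_A^r = (Y_1 + Y_2 + \cdots + Y_h)^r$ by multilinearity and apply the linear functional $C[\cdot]$, yielding
\[
C[Y_A^r] = \sum_{(i_1, \dots, i_r) \in \{1,\dots,h\}^r} C[Y_{i_1} Y_{i_2} \cdots Y_{i_r}].
\]
The crucial observation is that each $Y_i$ is an indicator, so $Y_i^k = Y_i$ for every $k \geq 1$; hence $Y_{i_1} Y_{i_2} \cdots Y_{i_r} = Y_{j_1} Y_{j_2} \cdots Y_{j_m}$, where $j_1 < j_2 < \cdots < j_m$ is the list of distinct indices appearing in the tuple. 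Consequently, $C[Y_{i_1} \cdots Y_{i_r}]$ is determined entirely by the support $\{j_1, \dots, j_m\} \subseteq \{1,\dots,h\}$, regardless of the multiplicities with which those indices occur in the tuple.

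Given this, the strategy is to group the tuples according to the \emph{shape} of their multiset of entries. Every tuple $(i_1, \dots, i_r)$ determines a partition of $r$, namely $P = p_1^{(m_1)} + p_2^{(m_2)} + \cdots + p_s^{(m_s)}$, whose parts are the multiplicities with which the distinct indices occur. For a fixed partition $P$ with $m = m_1 + \cdots + m_s$ parts and a fixed $m$-subset $\{j_1 < \cdots < j_m\}$ of $\{1,\dots,h\}$ serving as the support, the number of tuples $(i_1,\dots,i_r)$ producing that partition and that support factors cleanly into two independent counts: first, choose which of the $m$ positions of the support receive multiplicity $p_1$, which receive $p_2$, and so on, contributing $\frac{m!}{m_1!\, m_2! \cdots m_s!}$; second, with the multiplicities of the entries now fixed, linearly order the $r$ indices into a tuple, contributing the multinomial coefficient $\frac{r!}{(p_1!)^{m_1} (p_2!)^{m_2} \cdots (p_s!)^{m_s}}$.

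Summing the constant contribution $C[Y_{j_1} \cdots Y_{j_m}]$ over all $m$-subsets of $\{1, \dots, h\}$ produces exactly $M_m(h)$, and then summing over all integer partitions $P$ of $r$ yields the claimed identity \eqref{eq:integer_partition}. The main obstacle is organizing the decomposition so that the multiplicity-assignment step (distinguishing positions of the support that happen to share the same multiplicity size) and the linear-ordering step are disentangled without double-counting or missing arrangements; I would sanity-check against the small cases $r=1,2,3$ already listed just above the proposition, which recover $C[Y_A] = M_1(h)$, $C[Y_A^2] = M_1(h) + 2M_2(h)$, and $C[Y_A^3] = M_1(h) + 6M_2(h) + 6M_3(h)$.
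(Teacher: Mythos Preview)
Your argument is correct and is precisely the ``counting method'' the paper invokes but does not spell out: the paper's entire justification for this proposition is the single phrase ``by a counting method,'' and your expansion of $(Y_1+\cdots+Y_h)^r$, reduction via $Y_i^k=Y_i$, and grouping of tuples by partition shape (with the two-factor count $\frac{m!}{m_1!\cdots m_s!}\cdot\frac{r!}{(p_1!)^{m_1}\cdots(p_s!)^{m_s}}$) is exactly the intended derivation. Your sanity checks against $r=1,2,3$ match the paper's listed cases.
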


The formula \eqref{eq:integer_partition} may seem a little complicated at first. However, the fact that $C[Y^r_A]$, as a linear combination of $M_i,\, 1 \leq i \leq r,$ remains in the same form leads us to the following corollary. Remarkably, this fact  allows us to avoid the explicit calculation of \eqref{eq:integer_partition}.

\begin{cor} 
\label{cor:linear combination}
Let  $L = \lc h/2 \rc$. Then,
\begin{equation}
\label{eq:linear combination}
C[Y^r_A]=2^n \left[ P(L)\binom{2L}{L}\cdot\dfrac{1}{4^L}+Q(L) \right],  
\end{equation}
for some polynomials $P(L)$ and $Q(L)$ with degrees no more than $\lc r/2 \rc$ and $\lf r/2 \rf$, respectively.
\end{cor}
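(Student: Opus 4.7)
The plan is to derive the corollary directly from Theorem \ref{Main} together with the preceding proposition expressing $C[Y^r_A]$ as a linear combination of the quantities $M_m(h)$. The key observation is that the coefficients in \eqref{eq:integer_partition} are purely combinatorial — they depend only on $r$ and on the integer partition $P$, not on $L$ or $n$ — so the whole expression is a finite $\Z$-linear combination of $M_1(h), M_2(h), \dots, M_r(h)$ with constants that can be pulled out of any manipulation in $L$.

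First, I would apply \eqref{eq:integer_partition} to rewrite
\[
C[Y^r_A] \;=\; \sum_{m=1}^{r} c_{r,m}\, M_m(h),
\]
where
\[
c_{r,m} \;=\; \sum_{\substack{P \in Par(r) \\ \#\text{parts}(P)=m}} \frac{r!}{(p_1!)^{m_1}\cdots(p_s!)^{m_s}}\cdot\frac{m!}{m_1!\cdots m_s!}
\]
collects all partitions of $r$ having exactly $m$ parts. Crucially, each $c_{r,m}$ is a fixed nonnegative integer independent of $L$. Next, I would invoke Theorem \ref{Main} on each term to write
\[
M_m(h) \;=\; 2^n\!\left[\, P_m(L)\binom{2L}{L}\frac{1}{4^L} + Q_m(L) \,\right],
\]
with $\deg P_m \le \lceil m/2\rceil$ and $\deg Q_m \le \lfloor m/2\rfloor$.

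Substituting back and factoring out $2^n$, I would set
\[
P(L) \;=\; \sum_{m=1}^{r} c_{r,m}\, P_m(L), \qquad Q(L) \;=\; \sum_{m=1}^{r} c_{r,m}\, Q_m(L),
\]
so that $C[Y^r_A] = 2^n\bigl[\,P(L)\binom{2L}{L}/4^L + Q(L)\,\bigr]$ exactly as claimed. To verify the degree bounds, I would use the monotonicity $m \le r \Rightarrow \lceil m/2\rceil \le \lceil r/2\rceil$ and $\lfloor m/2\rfloor \le \lfloor r/2\rfloor$: since a finite sum of polynomials of bounded degree is again a polynomial of that bounded degree, $\deg P \le \lceil r/2\rceil$ and $\deg Q \le \lfloor r/2\rfloor$ follow immediately.

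There is no real obstacle here once Theorem \ref{Main} is available — the corollary is essentially a bookkeeping consequence of linearity. The only thing that has to be checked carefully is that the combinatorial coefficients $c_{r,m}$ truly do not depend on $L$ (so that they pass freely through the degree bound), and that the parity/floor dependence of $P_m, Q_m$ on $h$ noted in the remark after Theorem \ref{Main} is consistent across all $m \le r$, so that one can speak of a single pair $(P,Q)$ for each parity class of $h$. Both points are immediate from the statement of Theorem \ref{Main} and the fact that \eqref{eq:integer_partition} holds for every $h$.
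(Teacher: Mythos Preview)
Your argument is correct and matches the paper's own reasoning: the paper derives the corollary by observing that \eqref{eq:integer_partition} writes $C[Y^r_A]$ as a fixed $\Z$-linear combination of $M_1(h),\dots,M_r(h)$, each of which has the form of Theorem~\ref{Main}, so the same form with the maximal degree bounds persists. Your explicit bookkeeping with the coefficients $c_{r,m}$ and the monotonicity of $\lceil m/2\rceil,\lfloor m/2\rfloor$ just spells this out in full.
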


Corollary \ref{cor:linear combination}, together with the numeric expression of $C[Y^r_A]$ we developed earlier in Section \ref{sec:numericMoment}, is the key to bypass completely the step of deriving a closed-form formula for $\sum C[Y_{i_1}Y_{i_2}\dots Y_{i_r}]$ for $r\geq0$, and instead directly derive a closed-form formula for $C[Y^r_A]$ as we shall now see.

\textbf{An unexpected (but life-saving) application of interpolation to find a closed-form formula for $C[Y^r_A]$}

A numeric expression of $C[Y^r_A]$ from generating functions can be used to find its closed-form formula via interpolation. To be more precise, from~\eqref{eq:linear combination}, 
\[
C[Y^r_A]=2^{n-h} \left\{4^L \left[ P(L)\binom{2L}{L}\cdot\dfrac{1}{4^L}+Q(L) \right]\right\}=2^{n-h} \left[ P(L)\binom{2L}{L}+Q(L)\cdot 4^L  \right].
\]
On the other hand, for fixed values of $r$ and $h$, we can compute the value of $D^{(r)}F_A(q;h)\vert_{q=1}$ where $F_A(q;h)=\sum_{a=0}^h G(a,h-a,q)$, and use Corollary \ref{cor:CY_A,CZ_B} to connect it to $C[Y^r_A](h)$ through the use of numeric expression:
\begin{equation}
\label{eq:F_A}
C[Y^r_A](h)= 2^{n-h} D^{(r)}F_A(q;h)\vert_{q=1}.
\end{equation}

We thus can make use of the numeric expression to generate a dataset required for fitting $P(L)$ and $Q(L)$. Since the polynomial expressions of $P(L)$ and $Q(L)$ depend on the parity of $h$,  we explain briefly how to carry out interpolation for the even~$h$.

\begin{algorithm}
\caption{{\bf Interpolation Procedure for $h=2L$ ($r$ fixed)}}
    {\bf Step 1:} Generate enough data $\left\{D^{(r)}F_A(q;2L)\vert_{q=1}: L=1,2,3,\dots...\right\}$. (At least $\lc r/2 \rc+\lf r/2 \rf+2$ data points are required to fit polynomials  $P(L)$ and $Q(L)$ with the desired degrees.)\\
   {\bf Step 2:} Define the function $f(L)=P(L)\binom{2L}{L}+Q(L)\cdot 4^L $  and use the data generated in Step 1 to fit $f(L)$ and recover the polynomials $P(L)$ and $Q(L)$.\\
   {\bf Step 3:}  Use \eqref{eq:linear combination} to obtain the closed-form expression for $C[Y^r_A](h)$ for the even $h$.
\end{algorithm}

Note that with the fastest version \circled{III} of the generating function, the dataset in Step 1 can be generated very efficiently. Following the procedure, the closed-form expression for $C[Y^r_A]$ can be obtained for even $h$. The formula when $h$ is odd can be obtained in the same manner. 

Given $h= \lc n/2 \rc$ and
$L = \lc h/2\rc$, the list of $C[Y^r_A]$ for $r=1, 2, 3$ is given below.

For even $h$:
\begin{align*} C[Y_A] &= 2^{n-h}\left[ (2L+1/2)\binom{2L}{L} -\dfrac{1}{2} \cdot 4^L \right], \\
C[Y_A^2] &= 2^{n-h}\left[ -(4L+5/2)\binom{2L}{L} +(2L+5/2) \cdot 4^L \right], \\
C[Y_A^3] &= 2^{n-h}\left[ (8L^2+24L+19/2)\binom{2L}{L} -(9L+19/2) \cdot 4^L \right]. 
\end{align*}

For odd $h$:
\begin{align*} C[Y_A] &= 2^{n-h}\left[ L \cdot\binom{2L}{L} -\dfrac{1}{4} \cdot 4^L \right], \\
C[Y_A^2] &= 2^{n-h}\left[ -(2L+1)\binom{2L}{L} +(L+3/4) \cdot 4^L \right], \\
C[Y_A^3] &= 2^{n-h}\left[ (4L^2+9L+3)\binom{2L}{L} -(9L/2+13/4) \cdot 4^L \right]. 
\end{align*}

{\bf$\mathtt</\hspace{-0.3em}>$}Maple command to find the polynomials inside the bracket is {\bf\texttt{FitMoFA(h1,r,n,L)}}.\\
For example, for the odd $h$ of $C[Y_A^3]$, try
{\bf\texttt{FitMoFA(1,3,n,L)}}.

In addition, the list of the moments for the case $n=4L$ is given as follows.
\begin{align*}
E[X^0] &= 1\\
E[X\,] &= \dfrac{(4L+1)}{4^L}\binom{2L}{L}-1+\dfrac{6}{2^{4L}} \\
E[X^2] &= \dfrac{(4L+1)^2}{2\cdot4^{2L}}\binom{2L}{L}^2
-\dfrac{6(2L+1)}{4^L}\binom{2L}{L}+4L+\dfrac{11}{2}+\dfrac{38}{2^{4L}} \\
E[X^3] &= -\dfrac{3(8L+5)(4L+1)}{2\cdot4^{2L}}\binom{2L}{L}^2
+\dfrac{2(20L^2+48L+17)}{4^L}\binom{2L}{L}-24L-\dfrac{53}{2}+\dfrac{186}{2^{4L}} \\
E[X^4] &= \dfrac{(256L^3+1024L^2+736L+151)}{2\cdot4^{2L}}\binom{2L}{L}^2
-\dfrac{8(50L^2+94L+33)}{4^L}\binom{2L}{L}\\
&+48L^2+232L+\dfrac{377}{2}+\dfrac{830}{2^{4L}} \\
E[X^5] &= -\dfrac{15(256L^3+736L^2+508L+101)}{2\cdot4^{2L}}\binom{2L}{L}^2
+\dfrac{2(344L^3+2558L^2+3610L+1163)}{4^L}\binom{2L}{L}\\
&-720L^2-2280L-\dfrac{3137}{2}+\dfrac{3546}{2^{4L}}.
\end{align*}

{\bf$\mathtt</\hspace{-0.3em}>$}Maple command to find these moments is {\bf\texttt{ForMoX(n1,r,L)}}.\\
For example, for the result of $E[X^5]$ above, try
{\bf\texttt{ForMoX(0,5,L)}}.

\subsection{Distribution of $X$ for a one-time shuffled deck}

To complete the section for a 1-time shuffle, we provide the list of the moments about the mean for $n=4L$.
\begin{align*}
E[X-\mu] &= 0,\\ 
E[(X-\mu)^2] &= -\dfrac{(4L+1)^2}{2\cdot4^{2L}}\binom{2L}{L}^2
-\dfrac{4(L+1)}{4^L}\binom{2L}{L}+4L+\dfrac{9}{2}+o(1),\\
E[(X-\mu)^3] &= \dfrac{(4L+1)^3}{2\cdot4^{3L}}\binom{2L}{L}^3
+\dfrac{6(L+1)(4L+1)}{4^{2L}}\binom{2L}{L}^2 \\
&-\dfrac{(8L^2-6L-11/2)}{4^L}\binom{2L}{L}-12(L+1) +o(1), \\
E[(X-\mu)^4] &= 
-\dfrac{128L^3+320L^2+128L+1/2}{4^{2L}}\binom{2L}{L}^2 \\
&-\dfrac{48L^2+184L+112}{4^L}\binom{2L}{L}+48L^2+160L+225/2
+o(1). 
\end{align*}

{\bf$\mathtt</\hspace{-0.3em}>$}Maple command to find these moments is {\bf\texttt{ForMoXMean(n1,r,L)}}.\\
For example, for the result of $E[(X-\mu)^4]$ above, try
{\bf\texttt{ForMoXMean(0,4,L)}}.

Finally, the sequence (in $r$) of the $r$th standardized moments as $L$ approaches infinity, i.e.
$\displaystyle \lim_{L \to \infty}\dfrac{E[(X-\mu)^r]}{E[(X-\mu)^2] ^{r/2}},$ 
is found to be
\[ \left[ 0, 1, \dfrac{4-\pi}{(\pi-2)^{3/2}} , \dfrac{(3\pi-8)\pi}{(\pi-2)^2}, \dots  \right] .\]

This is a little disappointment as the sequence does not seem to fit any existing distributions (as far as we know).
However, it confirms the non-normality of the distribution we observed earlier in the histograms of Figure \ref{fig:histogram}.

\section{$k$-shuffles, $k \geq 2$ \label{sec:k_shuffles}}

Riffle-shuffle a deck of $n$ cards for $k\geq2$ times, where initially the cards are ordered from $1$ to $n$. Assume that the player guesses the cards with the optimal guessing strategy ${\cal{G}}^*$ for all $n$:
\[
\underbrace{1,\dots, 1}_{2^k-1\text{ times}}, \;\underbrace{2,\dots, 2}_{2^k\text{ times}}, \;\underbrace{3,\dots, 3}_{2^k\text{ times}}, \;\underbrace{4,\dots, 4}_{2^k\text{ times}}, \dots  
\]
for the top half and the bottom half in the reverse manner, i.e. 
\[
\dots, \underbrace{n-3,\dots, n-3}_{2^k\text{ times}}, \;\underbrace{n-2,\dots, n-2}_{2^k\text{ times}},\; \underbrace{n-1,\dots, n-1}_{2^k\text{ times}}, \;\underbrace{n,\dots, n}_{2^k-1\text{ times}}. 
\]

For $k=1$ shuffle, we have seen that the size of the sample space is $2^n$, where each permutation has two (increasing) sequences $A$, $B$ according to Definition \ref{def:increasing_seq}. Similarly, for $k>1$ shuffles, the size of the sample space is $2^{kn}$, where each permutation has $2^k$ (increasing) sequences. 

\subsection{First moment}  
In this section, we use $C$ to denote the number of (increasing) sequences in the permutation. For example, if the permutation is $[4,3,1,2]$ and the sequences are $[4],[3],[1,2]$, then $C=3$. The number of possible  outcomes of permutation with $C$ increasing sequences is $C^n$ (easily proved by a bijective proof).  But of course, this permutation can never be obtained by riffle shuffles because a $k$-time riffle-shuffled deck will always give $2^k$ sequences.  However, since the identity \eqref{eq:k_shuffles} holds for any positive integer $C\geq1$, we state and prove the theorem for a permutation with any number of increasing sequences in general. 
Note also that the optimal guessing strategy ${\cal{G}}^*$ is equivalent to guessing the number $\lf i/C \rf+1$ for the card position $i$.
The subscript $C$ on $E_C[\,\cdot\,]$ is simply used to remind us of the number of sequences.

\begin{thm}
\label{thm:k_shuffles}
Let $C$ be a positive integer representing the number of sequences allowed for the permutation.  Let $X$ be a random variable representing the number of correct guesses under ${\cal{G}}^*$. Decompose $X=Y+Z$, where $Y$ and $Z$ are the number of correct guesses in the top and bottom halves of the deck. Let $h=\lc \dfrac{n}{2} \rc$ be the length of the top half. Then,
\[ E_C[X] = E_C[Y]+E_C[Z],\]
where
\begin{small}
\begin{equation} 
\label{eq:k_shuffles}
C^n \cdot E_C[Y] = \sum_{i=1}^h \sum_{m=1}^C \sum_{T+S=\lf i/C \rf } 
\left[ \binom{i-1}{T} m^T(C-m)^{i-1-T}\right] \cdot \left[\binom{n-i}{S} (m-1)^S(C-m+1)^{n-i-S}\right]. 
\end{equation}
\end{small}
\end{thm}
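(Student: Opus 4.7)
The plan is to prove \eqref{eq:k_shuffles} by linearity of expectation combined with a direct bijective count on the sample space. The decomposition $X = Y + Z$ yields $E_C[X] = E_C[Y] + E_C[Z]$ immediately, so it suffices to establish the formula for the top half. Writing $Y = \sum_{i=1}^h Y_i$ with $Y_i$ the indicator that position $i$ is guessed correctly, the task reduces to computing, for each $1 \leq i \leq h$, the number $N_i$ of outcomes whose value at position $i$ equals $v := \lfloor i/C \rfloor + 1$, and showing that $N_i$ matches the inner double sum over $m$, $T$, $S$.

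First I would encode each of the $C^n$ outcomes as a labeling $\ell : \{1, \ldots, n\} \to \{1, \ldots, C\}$, where $\ell_i$ records which of the $C$ increasing sequences the card at position $i$ belongs to. Setting $a_j = |\ell^{-1}(j)|$, sequence $j$ holds the consecutive values $a_1 + \cdots + a_{j-1} + 1, \ldots, a_1 + \cdots + a_j$ placed in increasing order at the positions labelled $j$; the identity $\sum_{a_1 + \cdots + a_C = n} \binom{n}{a_1, \ldots, a_C} = C^n$ confirms this encoding is a bijection onto the sample space.

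The key structural step, and the one requiring the most care, is the following. Fix $m \in \{1, \ldots, C\}$ and restrict to labelings with $\ell_i = m$. Since sequence $m$ is laid down in increasing position order and its values form a consecutive block, the value at position $i$ equals $v$ iff the total number of cards of value strictly below $v$ equals $v - 1 = \lfloor i/C \rfloor$. These cards are precisely those in sequences $1, \ldots, m-1$ (anywhere) together with the cards of sequence $m$ located at positions strictly before $i$. Setting
\[
T = |\{i' < i : \ell_{i'} \leq m\}|, \qquad S = |\{i' > i : \ell_{i'} \leq m - 1\}|,
\]
every card of value below $v$ is counted exactly once by $T$ or $S$, so correctness at position $i$ is equivalent to $\ell_i = m$ together with the single identity $T + S = \lfloor i/C \rfloor$, with no further restriction on how the labels $\leq m$ before $i$ split between label $m$ itself and labels $< m$.

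Given this equivalence, the counting is routine. For each admissible triple $(m, T, S)$, the $i - 1$ positions before $i$ split into $T$ positions carrying any label in $\{1, \ldots, m\}$ and $i - 1 - T$ positions carrying any label in $\{m + 1, \ldots, C\}$, contributing $\binom{i-1}{T} m^T (C - m)^{i - 1 - T}$ labelings. The $n - i$ positions after $i$ similarly contribute $\binom{n-i}{S} (m - 1)^S (C - m + 1)^{n - i - S}$. Summing over $m \in \{1, \ldots, C\}$ and over $T + S = \lfloor i/C \rfloor$, and then over $i$, reproduces the right-hand side of \eqref{eq:k_shuffles}. The main obstacle is the equivalence in the structural step: one must verify that the single global identity $T + S = \lfloor i/C \rfloor$, with no additional constraint on the intra-group label distribution, genuinely captures correctness at position $i$, and this hinges on the fact that within sequence $m$ the values appear in increasing position order, so any positions before $i$ carrying label $m$ are automatically occupied by values below $v$.
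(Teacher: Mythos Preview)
Your proof is correct and follows essentially the same combinatorial approach as the paper: both arguments identify the key condition $T+S=\lfloor i/C\rfloor$ with $T$ counting positions before $i$ in sequences $1,\ldots,m$ and $S$ counting positions after $i$ in sequences $1,\ldots,m-1$, then perform the same multinomial count. Your explicit labeling bijection $\ell:\{1,\ldots,n\}\to\{1,\ldots,C\}$ is a clean formalization of what the paper describes more pictorially via the splitting of each sequence into its before/after parts $A_t,B_t$, but the substance is the same.
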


\begin{proof} We give a combinatorial proof to \eqref{eq:k_shuffles}, which determines the sum of the number of correct guesses in the top half of all possible permutations with $C$ increasing sequences, $C_C[Y]$. 
Starting from the outermost sum, a fixed index $i$ denotes the position in the top half ($1\leq i\leq h$) with a correct guess under ${\cal{G}}^*$, i.e. the value of the card at position $i$ is $\lf i/C \rf+1$. 
For a fixed position $i$, this value can come from any sequence $m$th. Hence, the middle sum sums over all these possibilities ($1\leq m \leq C$).

For $1 \leq t \leq C$, let $A_t$ ($B_t$) be the portion of the $t$-th sequence that comes before (after) $i$. Also, we denote by $a_t$ ($b_t$)  the length of $A_t$ ($B_t$).

For a fixed position $i$, we have the condition
\[  \sum_{t=1}^{C} a_t = i-1.  \]

Moreover, if the value $v$ of the $i$th position comes from the $m$th sequence, $1\leq m \leq C$, then the following identity holds:
\[ \sum_{t=1}^m a_t + \sum_{t=1}^{m-1} b_t = v-1.   \]
 \begin{figure}[h!]
    \centering
	\includegraphics[width=0.8\textwidth]{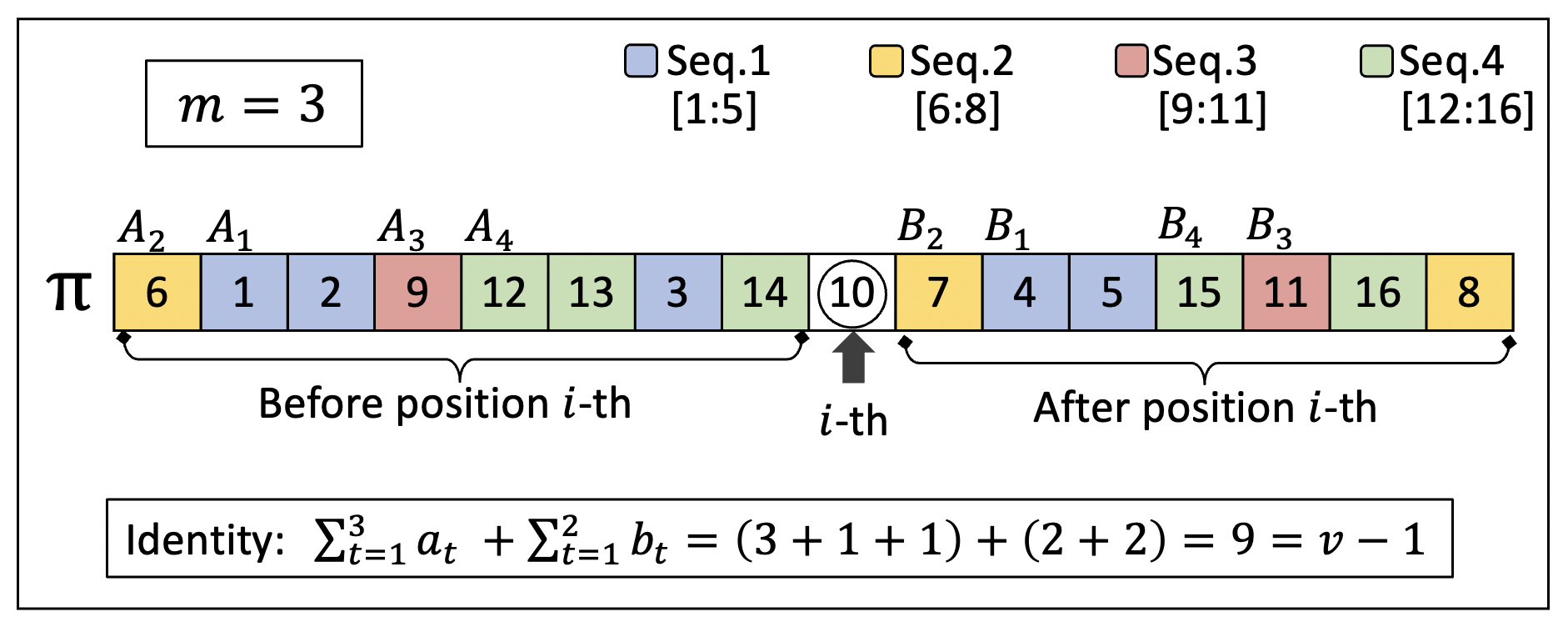} 
    \caption{Example of $\sum_{t=1}^m a_t + \sum_{t=1}^{m-1}b_t =v-1$. Here, $n=16$, $C=4$, $m=3$, $v=10$.}
    \label{fig:explaning_k_shuffles}
\end{figure}

To see why this statement holds, let us consider an example given in Figure \ref{fig:explaning_k_shuffles} where there are total of 4 increasing sequences. The $i$th position  splits each of these sequences into the portions ``before'' and ``after'', namely, $A_1=[1:3]$, $A_2=[6]$, $A_3=[9]$, $A_4=[12:14]$ and $B_1=[4:5]$, $B_2=[7:8]$, $B_3=[11]$, $B_4=[15:16]$. Since the $i$th position comes from the 3rd sequence ($m=3$) with the corresponding card value $v=10$, we can see that 
\[ \sum_{t=1}^m a_t + \sum_{t=1}^{m-1} b_t = (3+1+1)+(2+2)=9=v-1. \]

In particular, if we apply the statement above to the correct position, the statement will give the necessary and sufficient condition for the correct position. By letting $T$ and $S$ be the two sums $\sum_{t=1}^m a_t$ and $\sum_{t=1}^{m-1} b_t$, respectively, and noting the value at the correct position $i$ being $v=\lf i/C \rf+1$, we obtain the bound of the innermost sum $T+S=v-1=\lf i/C \rf $.

Finally, the first (second) bracket of \eqref{eq:k_shuffles} represents the number of ways to place cards before (after) position $i$. The details of the first bracket are as follows. The term $\binom{i-1}{T}$ comes from the fact that there are $i-1$ positions, $T$ of which is needed to place the sequence $A_1, \dots, A_m$ in. Once the $T$ positions have been fixed, there are $m$ possibilities (as there are $m$ sequences) from which each position can come, and $C-m$ possibilities for each of the other $i-1-T$ positions, leading to the factor $m^T(C-m)^{i-1-T}$. The second bracket can be justified in the same manner.
\end{proof}

{\bf Combinatorial interpretation and the connection with results of \cite{KT2}}

It is very likely that there is no closed-form formula for $E_C[X]$ 
except for the case $C=2$ for which the expression is reduced to the case $k=1$ shuffle. However, for fixed values of position $i$ and sequence $m$, the formula $\sum_{T+S=\lf i/C \rf}\left[ \binom{i-1}{T} \left(\dfrac{m}{C}\right)^T(1-\dfrac{m}{C})^{i-1-T}\right] \cdot \left[\binom{n-i}{S} \left(\dfrac{m-1}{C}\right)^S(1-\dfrac{m-1}{C})^{n-i-S}\right]$ in \eqref{eq:k_shuffles}
is a Poisson binomial distribution, i.e. flipping two coins for a total of  $n-1$ times, where the first $i-1$ flips come from the first coin with probability $p=\dfrac{m}{C}$ of getting a head, and the other $n-i$ flips come from the second coin with probability $q=\dfrac{m-1}{C}$ of getting a head. Then, this expression is the probability $P\left(N=\lf i/C \rf\right)$ where $N$ is a random variable counting the number of heads out of these $n-1$ flips.

We were surprised at first to find the resemblance between this formula and the generating function in \cite[Theorem 2]{KT2} as we established the result here with a completely different approach. However, it turns out that this is not at all a coincidence. While the proof can also be derived algebraically using the $k$-step probability transition matrix given in  \cite{KT2}, with the method of overlapping stages, we were able to avoid the algebraic proof, and instead provided a self-contained combinatorial proof.
Nevertheless, by recalling several objects introduced in  \cite{KT2}, the interpretation we just gave is precisely the combinatorial interpretation for the generating function object $F_L$ appearing in \cite[Theorem 2]{KT2}. This is the answer to the open problem 1 of \cite{KT2}. 

In addition, the result of \cite[Section 4]{KT2} indicates that as $n \to \infty,$ the dominated term of  \eqref{eq:k_shuffles} comes from $m=1$ only. Thus, keeping only the leading term $m=1$,  \eqref{eq:k_shuffles} is simplified to
\[ E_C[Y] = \dfrac{1}{C^n} \sum_{i=1}^h 
\binom{i-1}{\lf i/C \rf}(C-1)^{i-1-\lf i/C \rf}C^{n-i}+\mathcal{O}(1) = \sqrt{\dfrac{n}{(C-1)\pi}}+\mathcal{O}(1), \]
and hence
\[   E_C[X] = 2\sqrt{\dfrac{n}{(C-1)\pi}}+\mathcal{O}(1).  \]
In particular, when $C=2^k$, we obtain the leading term for $E_{2^k}[X]$, which was found previously in \cite{KT2} using a totally different proof approach. 

\subsection{Comments on higher moments} 

Calculation of higher moments for $k$ shuffles is even more challenging.
The generating function method (even with the method \circled{II}) becomes very slow as the number of variable grows exponentially with the number of shuffles $k$. Because all the $2^k$ sequences greatly contribute to the number of correct guesses, it is very likely that a fastest calculation method \circled{III} does not exist anymore. 

As for the method of overlapping stages (either for a closed-form formula or a nested-sum),
we have to figure out the calculation for $E[X_{[1]}X_{[2]}]$ (to begin with), where $X_{[i]}$ is a random variable that counts the number of correct guesses coming from sequence $i$th.
The non-independence between $X_{[1]}$ and $X_{[2]}$ makes the application of the method of overlapping stages on the higher moments very difficult. 
Although obtaining the leading term approximation (that likely comes from the sequence $m=1$ only) seems doable, we leave this for future work.

\subsubsection*{Conflict of interest statement} \vspace{-1em}
On behalf of all authors, the corresponding author states that there is no conflict of interest.


\appendix
\newpage
\section{Proof of Theorem \ref{Main}\label{sec:appA}}

We will prove the following claim: 
\[
\dfrac{1}{2^n}\sum_{1\leq i_1 < i_2 < \dots < i_r \leq h}C[Y_{i_1}Y_{i_2}\dots Y_{i_r}]
= P(L)\binom{2L}{L}\cdot\dfrac{1}{4^L}+Q(L),
\]
for some polynomials $P(L)$ and $Q(L)$ with degrees no more than $\lc r/2 \rc$
and $\lf r/2 \rf$, respectively.

In what follows, we will establish the results for the case $h=2L$ using generating functions to evaluate combinatorial sums in order to reveal the true structure of the object. For the case $h=2L-1$, the proof can be  carried out in very much the same way.

Recalling the expression of $C[Y_{i_1}Y_{i_2}\dots Y_{i_r}]$ in \eqref{eq:low}, 
we denote by $F^{(r)}(y)$ the generating function of the left-hand-side expression above (to be referred to as the ``original sum'' hereafter). Then,
\begin{small}
\begin{align*}
F^{(r)}(y) &:=\sum_{L=0}^{\infty}\left(\dfrac{1}{2^n}\sum_{1\leq i_1 < i_2 < \dots < i_r \leq h}C[Y_{i_1}Y_{i_2}\dots Y_{i_r}]\right)y^L\\
&=\sum_{L=0}^{\infty}\left(\sum_{1\leq i_1 < i_2 < \dots < i_r \leq 2L}  
\binom{i_1-1}{\lf i_1/2\rf}  \binom{i_2-i_1-1}{\lf i_2/2\rf- \lf i_1/2 \rf -1} \dots
\binom{i_r-i_{r-1}-1}{\lf i_r/2\rf- \lf i_{r-1}/2 \rf -1}  \dfrac{1}{2^{i_r}}  \right)y^L.     
\end{align*}
\end{small}

To demonstrate how we can handle the nested sum, let us start with $r=1$. Depending on the parity of the index $i$, the original sum can be decomposed into 
\begin{align*}
\dfrac{1}{2^n}\sum_{i=1}^{2L} C[Y_{i}] =  \sum_{i=1}^{2L} \binom{i-1}{\lf i/2\rf} \dfrac{ 1 }{2^i}
=  \underbrace{\sum_{s=0}^{L-1} \binom{2s}{s}\cdot \dfrac{1}{2}\cdot \dfrac{1}{4^s}}_{[0]}+\underbrace{\sum_{s=0}^{L-1} \binom{2s+1}{s}\cdot \dfrac{1}{4} \cdot \dfrac{1}{4^s}}_{[1]}.
\end{align*}
Recall that we have already derived the closed form of this expression in Section \ref{sec:closed-form-EX}. 
Here, we will solve for its formula through $F^{(1)}(y)$. To this end, we apply the generating function and evaluate each of the sums separately.
\begin{align*} 
F_{[0]}(y) &= \sum_{L=0}^{\infty}\sum_{s=0}^{L-1}
\binom{2s}{s} \dfrac{1}{2} \cdot \dfrac{1}{4^s} \cdot y^L 
= \dfrac{1}{2} \cdot \sum_{s=0}^{\infty}
\binom{2s}{s}  \dfrac{1}{4^s} \cdot \sum_{L=s+1}^{\infty}y^L \\
&=  \dfrac{y}{2(1-y)} \cdot \sum_{s=0}^{\infty}
\binom{2s}{s}  \left(\dfrac{y}{4}\right)^s 
=\dfrac{y}{2(1-y)^{3/2}}. 
\end{align*}
\begin{align*} 
F_{[1]}(y) &= \sum_{L=0}^{\infty}\sum_{s=0}^{L-1}
\binom{2s+1}{s} \dfrac{1}{4} \cdot \dfrac{1}{4^s} \cdot y^L 
= \dfrac{1}{4} \cdot \sum_{s=0}^{\infty}
\binom{2s+1}{s}  \dfrac{1}{4^s} \cdot \sum_{L=s+1}^{\infty}y^L \\
&= \dfrac{y}{4(1-y)} \cdot \sum_{s=0}^{\infty}
\binom{2s+1}{s}  \left(\dfrac{y}{4}\right)^s  \\
&= \dfrac{1-\sqrt{1-y}}{2(1-y)^{3/2}} 
= \dfrac{1}{2(1-y)^{3/2}}-\dfrac{1}{2(1-y)} . 
\end{align*}
Hence, for $r=1$,
\[ F^{(1)}(y) = F_{[0]}(y)+F_{[1]}(y) 
= \dfrac{y+1}{2(1-y)^{3/2}}-\dfrac{1}{2(1-y)} .\]
Let us note that the closed-form formula of the original sum is the coefficient of $y^L$ in the final expression (after expanding it into a power series). We defer the discussion of this to Proposition \ref{prop:expansion}.

For the case $r=2$, depending on the parity of the indices $i, j$, the original sum can be decomposed into four cases
\begin{align*}
& \dfrac{1}{2^n}\sum_{1\leq i<j \leq 2L} C[Y_{i}Y_{j}] \\
&=  \sum_{1\leq i<j \leq 2L}
\binom{i-1}{\lf i/2\rf} \binom{j-i-1}{\lf j/2\rf- \lf i/2 \rf -1} \cdot \dfrac{1}{2^{j}}  \\
&= \underbrace{\dfrac{1}{4} \sum_{s=0}^{L-1} \sum_{t=s}^{L-1} \binom{2s}{s}
\binom{2t-2s}{t-s}\cdot \dfrac{1}{4^t}}_{[0,0]}
+\underbrace{\dfrac{1}{8} \sum_{s=0}^{L-1} \sum_{t=s}^{L-2} \binom{2s}{s}
\binom{2t-2s+1}{t-s}\cdot \dfrac{1}{4^t}}_{[0,1]}\\
&+\underbrace{\dfrac{1}{8} \sum_{s=0}^{L-1} \sum_{t=s}^{L-2} \binom{2s+1}{s}
\binom{2t-2s}{t-s-1}\cdot \dfrac{1}{4^t}}_{[1,-1]}
+\underbrace{\dfrac{1}{16} \sum_{s=0}^{L-1} \sum_{t=s}^{L-2} \binom{2s+1}{s}
\binom{2t-2s+1}{t-s}\cdot \dfrac{1}{4^t}}_{[1,1]}.
\end{align*}

To obtain $F^{(2)}(y)$, we again apply the generating function to each of the sums separately. We leave it to the interested reader to verify the following identities, which will be used repeatedly when manipulating the nested sums for the generating functions. 

\begin{prop}
\label{prop:three_identities}
\begin{align*}
\sum_{s=0}^{\infty} \binom{2s}{s} \left(\dfrac{y}{4}\right)^s &= \dfrac{1}{\sqrt{1-y}}, \tag{A[0]} \\
\sum_{s=0}^{\infty} \binom{2s+1}{s} \left(\dfrac{y}{4}\right)^s 
&= \dfrac{2(1-\sqrt{1-y})}{y\sqrt{1-y}}, \tag{A[1]}\\
\sum_{s=0}^{\infty} \binom{2s}{s-1} \left(\dfrac{y}{4}\right)^s 
&= \dfrac{(1-\sqrt{1-y})^2}{y\sqrt{1-y}}. \tag{A[-1]}
\end{align*}
\end{prop}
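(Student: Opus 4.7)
The plan is to derive the three identities in a linked fashion, starting from the well-known generating function in A[0] and leveraging Pascal's rule and binomial symmetry to obtain A[1] and A[-1] as simple algebraic consequences.

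First I would establish A[0] directly from Newton's generalized binomial series. Writing $(1-y)^{-1/2} = \sum_{s\ge 0} \binom{-1/2}{s}(-y)^s$ and simplifying the generalized binomial coefficient via
\[
\binom{-1/2}{s}(-1)^s = \frac{(2s)!}{4^s (s!)^2} = \frac{1}{4^s}\binom{2s}{s},
\]
yields A[0] immediately. This is the only identity that requires an external analytic input; everything else will be purely combinatorial manipulation on top of it.

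Next I would reduce A[1] to A[0]. The key manipulation is the identity
\[
\binom{2s+1}{s} = \tfrac{1}{2}\binom{2s+2}{s+1},
\]
which follows at once from Pascal's rule combined with the symmetry $\binom{2s+1}{s}=\binom{2s+1}{s+1}$. Substituting into the left-hand side of A[1] and reindexing $s \mapsto s+1$ converts the series into $\tfrac{2}{y}\bigl(\sum_{s\ge 1}\binom{2s}{s}(y/4)^s\bigr)$, which by A[0] equals $\tfrac{2}{y}\bigl(\tfrac{1}{\sqrt{1-y}}-1\bigr)$, and a one-line simplification produces the closed form $\tfrac{2(1-\sqrt{1-y})}{y\sqrt{1-y}}$.

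Finally, A[-1] should drop out from the relation between A[0] and A[1]. Using binomial symmetry $\binom{2s}{s-1}=\binom{2s}{s+1}$ and Pascal's rule $\binom{2s+1}{s+1}=\binom{2s}{s}+\binom{2s}{s+1}$, one obtains
\[
\binom{2s}{s-1} \;=\; \binom{2s+1}{s} - \binom{2s}{s},
\]
(valid for all $s \ge 0$ with the usual convention $\binom{0}{-1}=0$). Summing termwise gives $\text{A}[-1] = \text{A}[1] - \text{A}[0]$, and routine algebra using $(1-\sqrt{1-y})^2 = 2 - y - 2\sqrt{1-y}$ confirms that the right-hand side matches the stated $\tfrac{(1-\sqrt{1-y})^2}{y\sqrt{1-y}}$.

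No step looks genuinely hard here; the only pitfalls are bookkeeping ones: keeping track of the $s=0$ boundary term in A[-1], verifying convergence on $|y|<1$, and making sure the algebraic simplification at the end matches the form requested. The whole proof should fit comfortably in a short displayed calculation, and since the three identities are interlinked through Pascal's rule and symmetry, proving A[0] essentially settles the rest.
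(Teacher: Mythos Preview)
Your proof is correct and complete: the derivation of A[0] from Newton's binomial series is standard, the reduction of A[1] to A[0] via $\binom{2s+1}{s}=\tfrac{1}{2}\binom{2s+2}{s+1}$ is clean, and the observation that $\binom{2s}{s-1}=\binom{2s+1}{s}-\binom{2s}{s}$ immediately gives A[$-1$] $=$ A[1] $-$ A[0], with the algebra checking out. The paper itself leaves this proposition as an exercise for the reader, so there is no competing argument to compare against; your linked derivation is exactly the kind of short, self-contained verification the authors intended.
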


\begin{notation}
There is a specific meaning to the labels $[0,0]$, $[0,1]$, $[1,-1]$, and $[1,1]$ of each part of the decomposed sum. In fact, it specifies the order in which the summation identities $(A[0])$, $(A[1])$, or $(A[-1])$ are applied when evaluating the nested sum. The label is, however, read from right to left, as the nested sum is evaluated from innermost (right) to outermost (left) one.
\end{notation}

Let us take a look at the generating function of the first case whose label is $[0,0]$. After interchanging the order of summation (pushing the summation index $L$ to innermost and evaluating it), the identity $(A[0])$ is applied twice to deal with the summation with respect to the indices $t$ and $s$, respectively, and we obtain:
\begin{align*} 
F_{[0,0]}(y) &= \dfrac{1}{4} \sum_{L=0}^{\infty}\sum_{s=0}^{L-1}\sum_{t=s}^{L-1}
\binom{2s}{s} \binom{2t-2s}{t-s}  \dfrac{1}{4^t} \cdot y^L 
= \dfrac{1}{4} \sum_{s=0}^{\infty}\sum_{t=s}^{\infty} 
\binom{2s}{s} \binom{2t-2s}{t-s} \dfrac{1}{4^t} \cdot \sum_{L=t+1}^{\infty}y^L \\
&= \dfrac{y}{4(1-y)} \sum_{s=0}^{\infty}
\binom{2s}{s}\cdot \sum_{t=s}^{\infty} \binom{2t-2s}{t-s} \left( \dfrac{y}{4} \right)^t 
\stackrel{\text{(A[0])}}{=} \dfrac{y}{4(1-y)^{3/2}} \cdot \sum_{s=0}^{\infty}
\binom{2s}{s} \left( \dfrac{y}{4} \right)^s\\ 
&\stackrel{\text{(A[0])}}{=} \dfrac{y}{4(1-y)^2}.
\end{align*}

We note that with this notation, the second value of each label (corresponding to the innermost summation index $t$) being $[0],[1],[-1]$ can be used to traced back the parity of the indices $i$ and $j$ in the original decomposed sum. For example, $[1]$ corresponds to the case whose indices $i$ and $j$ have same parity. $[0]$ shows up when $i$ is odd and $j$ is even. Finally, $[-1]$ shows up when $i$ is even and $j$ is odd.

Following the same procedure and applying the identities in an appropriate order, 
we obtain the expressions of the other three cases:
\begin{align*} 
F_{[0,1]}(y) &= \dfrac{1}{8} \sum_{L=0}^{\infty}\sum_{s=0}^{L-1}\sum_{t=s}^{L-2}
\binom{2s}{s} \binom{2t-2s+1}{t-s}  \dfrac{1}{4^t} \cdot y^L
\stackrel{(A[1])\rightarrow (A[0])}{=} \dfrac{y(1-\sqrt{1-y})}{4(1-y)^2}, \\
F_{[1,-1]}(y) &= \dfrac{1}{8} \sum_{L=0}^{\infty}\sum_{s=0}^{L-1}\sum_{t=s}^{L-2}
\binom{2s+1}{s} \binom{2t-2s}{t-s-1}  \dfrac{1}{4^t} \cdot y^L 
\stackrel{(A[-1])\rightarrow (A[1])}{=} \dfrac{(1-\sqrt{1-y})^3}{4(1-y)^2}, \\
F_{[1,1]}(y) &= \dfrac{1}{16} \sum_{L=0}^{\infty}\sum_{s=0}^{L-1}\sum_{t=s}^{L-2}
\binom{2s+1}{s} \binom{2t-2s+1}{t-s}  \dfrac{1}{4^t} \cdot y^L 
\stackrel{(A[1])\text{ twice}}{=} \dfrac{(1-\sqrt{1-y})^2}{4(1-y)^2}.
\end{align*}

Combining the four cases, the generating function $F^{(2)}(y)$ is simplified to
\begin{align*} F^{(2)}(y) &= F_{[0,0]}(y)+F_{[0,1]}(y)+F_{[1,-1]}(y)+F_{[1,1]}(y) \\
&= \dfrac{3-y}{2(1-y)^2}-\dfrac{3}{2(1-y)^{3/2}}.
\end{align*}
Again, the closed-form formula of the original sum is due to Proposition \ref{prop:expansion}.

The reader may notice that after applying the summation identities,
each of the four parts of $F^{(2)}(y)$ always takes the form $\dfrac{y^A(1-\sqrt{1-y})^B}{2^2(1-y)^2}$, for some non-negative integers $A$ and $B$. This observation leads to the following lemma which is used as a stepping stone to prove our main theorem.

\begin{lem}
\label{lem:partial}
Define
\begin{small}
\begin{equation}
\label{eq:gen_originalsum}
  F^{(r)}(y):=\sum_{L=0}^{\infty}\left(\sum_{1\leq i_1 < i_2 < \dots < i_r \leq 2L}  
\binom{i_1-1}{\lf i_1/2\rf}  \binom{i_2-i_1-1}{\lf i_2/2\rf- \lf i_1/2 \rf -1} \dots
\binom{i_r-i_{r-1}-1}{\lf i_r/2\rf- \lf i_{r-1}/2 \rf -1}  \dfrac{1}{2^{i_r}}  \right)y^L.   
\end{equation}
\end{small}

Then, the following holds. 

(i) $F^{(r)}(y)$ can be expressed as
\begin{equation} \label{eq:Fry}
F^{(r)}(y) = \sum_{i=1}^{2^r}\dfrac{y^{A_i} (1-\sqrt{1-y})^{B_i}}{2^r(1-y)^{1+r/2}},    
\end{equation} 
where $A_i$ is 0 or 1. Moreover, the non-negative integer $B_i$ is such that $B_i\leq r+1$ if $A_i=0$ or $B_i\leq r-1$ if $A_i=1$.

(ii) The partial fraction expansion form of  \eqref{eq:Fry}  is
\begin{equation} 
\label{eq:partial}
F^{(r)}(y) =   \dfrac{P(y)}{(1-y)^{1+r/2}}+ \dfrac{Q(y)}{(1-y)^{(1+r)/2}},  
\end{equation}
for some polynomials $P(y)$ and $Q(y)$, each having degree at most $\lc \dfrac{r}{2} \rc$
and $\lf \dfrac{r}{2} \rf$, respectively.
\end{lem}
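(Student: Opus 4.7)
The plan is to prove parts (i) and (ii) sequentially: first establish the $2^r$-term form of part (i) by a parity-based case decomposition together with iterated use of Proposition \ref{prop:three_identities}, and then derive the partial-fraction form of part (ii) by expanding $(1-\sqrt{1-y})^{B_i}$ via the binomial theorem and separating even and odd powers of $\sqrt{1-y}$. The worked cases $r=1,2$ in the text already exhibit the template.

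For part (i), I would substitute $i_j = 2s_j + \epsilon_j$ with $\epsilon_j \in \{0,1\}$ into each binomial coefficient in \eqref{eq:gen_originalsum}; this removes the floor functions and, for each fixed parity vector $\vec{\epsilon} \in \{0,1\}^r$, produces a nested sum over $s_1 \le s_2 \le \cdots \le s_r$ (up to small $\vec{\epsilon}$-dependent shifts). Pushing the outer $L$-sum innermost and evaluating $\sum_{L \ge s_r+1} y^L = y^{s_r+1}/(1-y)$ absorbs a $(y/4)^{s_r}$ into the remaining $s_r$-sum. Working from the innermost index outward, each of the $r$ layers matches, after a shift, exactly one of the identities $(A[0])$, $(A[1])$, $(A[-1])$ of Proposition \ref{prop:three_identities}, the choice being dictated by a pair of consecutive $\epsilon$'s. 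Each application contributes a $1/\sqrt{1-y}$ factor, so the cumulative denominator becomes $(1-y)\cdot (1-y)^{r/2} = (1-y)^{1+r/2}$. The surviving numerator reduces to $y^{A_i}(1-\sqrt{1-y})^{B_i}$, where $A_i$ records the net imbalance of $y$ coming from the identities and the $L$-sum, while $B_i$ accumulates the $(1-\sqrt{1-y})$ contributions of weight $1$ (from each $(A[1])$) and weight $2$ (from each $(A[-1])$).

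For part (ii), I would expand
\[
(1-\sqrt{1-y})^{B_i} \;=\; \sum_{k=0}^{B_i}\binom{B_i}{k}(-1)^k(1-y)^{k/2} \;=\; E_{B_i}(y)\;-\;\sqrt{1-y}\,O_{B_i}(y),
\]
where $E_{B_i}$ (a polynomial of degree $\lf B_i/2\rf$) collects the even-$k$ terms and $O_{B_i}$ (a polynomial of degree $\lf (B_i-1)/2\rf$) collects the odd-$k$ terms. The key simplification $\sqrt{1-y}/(1-y)^{1+r/2} = 1/(1-y)^{(1+r)/2}$ then splits each summand of part (i) cleanly into contributions to $P(y)/(1-y)^{1+r/2}$ and $Q(y)/(1-y)^{(1+r)/2}$, with $P(y) = 2^{-r}\sum_i y^{A_i}E_{B_i}(y)$ and $Q(y) = -2^{-r}\sum_i y^{A_i}O_{B_i}(y)$. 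The degree bounds $\deg P \le \lc r/2\rc$ and $\deg Q \le \lf r/2\rf$ then follow from separating the two cases: when $A_i=0$ one has $B_i \le r+1$, forcing $\lf B_i/2\rf \le \lc r/2\rc$ and $\lf (B_i-1)/2\rf \le \lf r/2\rf$; when $A_i=1$ one has $B_i \le r-1$, so that $1 + \lf B_i/2\rf$ and $1 + \lf (B_i-1)/2\rf$ are bounded by the same values.

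The main obstacle is the exponent bookkeeping in part (i): justifying that the net $y$-exponent always lies in $\{0,1\}$ and, crucially, that $B_i$ respects the coupled cap ($B_i\le r-1$ when $A_i=1$ and $B_i\le r+1$ when $A_i=0$). Each $(A[1])$ lowers $A_i$ by $1$ and raises $B_i$ by $1$, each $(A[-1])$ lowers $A_i$ by $1$ and raises $B_i$ by $2$, while $(A[0])$ does neither; against these, the initial binomial and the outer $L$-sum supply the positive $y$-inputs. Ruling out unfavorable parity patterns $\vec{\epsilon}$ and verifying that the two identities responsible for large $B_i$ (namely $(A[1])$ and $(A[-1])$) cannot both be used at the maximal rate without overspending the $y$-budget is the delicate combinatorial audit that pins down the statement; once it is settled, parts (i) and (ii) follow as described.
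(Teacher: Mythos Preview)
Your proposal is correct and follows essentially the same route as the paper: parity decomposition of the indices, iterated application of the three identities in Proposition~\ref{prop:three_identities} from the innermost sum outward, with the exponent bookkeeping (your ``combinatorial audit'') flagged as the crux and handled by the same mechanism (each use of $(A[1])$ or $(A[-1])$ consumes one power of $y$ while contributing one or two powers of $1-\sqrt{1-y}$). For part~(ii), your binomial expansion of $(1-\sqrt{1-y})^{B_i}$ into even and odd powers of $\sqrt{1-y}$ is a slightly more explicit version of the paper's one-line observation that the numerator has degree at most $(r+1)/2$ as a polynomial in $\sqrt{1-y}$.
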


\begin{proof}
For claim (i), Appendix \ref{sec:appB} provides a detailed calculation for the case $r=3$. Once familiar with the derivation procedure therein, the algebraic machinery behind the proof of this claim for general $r$ is almost routine. In fact it is easy to establish the following facts:\vspace{-1em}

\begin{enumerate}
\item The denominator is of the form $2^r(1-y)^{1+r/2}$;\vspace{-0.5em}
\item For each $i$, $A_i \in \{0,1\}$;\vspace{-0.5em}
\item For each $i$, if $A_i=0$, then $B_i \leq r+1$. Otherwise, if $A_i=1$, then $B_i \leq r-1.$
\end{enumerate}
\vspace{-1em}

The proofs of these facts rely heavily on the three summation identities in Proposition \ref{prop:three_identities}. We encourage the reader to take out a pen and paper and try to do the calculations on their fingers. The key points used to derive these facts are now summarized. 
\vspace{-0.5em}

For 1: Interchanging the order of the sum in \eqref{eq:gen_originalsum}, the inner sum (w.r.t.~the index $L$) contributes to the term $\dfrac{1}{1-y}$. Continuing working from the innermost toward the outermost sums, each sum contributes to the additional factor of $\dfrac{1}{\sqrt{1-y}}$ (by Proposition \ref{prop:three_identities}).
\vspace{-0.5em}

For 2: This depends on the parity of the index $i_1$ of the outermost sum in \eqref{eq:gen_originalsum}. If $i_1$ is odd then $A_i=1$, else $A_i=0$.
\vspace{-0.5em}

For 3: The argument relies on the number of times $(A[1])$ and $(A[-1])$ have been applied. Each time $(A[1])$ is applied, the degree of $1-\sqrt{1-y}$ increases by 1. Each time $(A[-1])$ is applied, the degree of $1-\sqrt{1-y}$ increases by 2. (The latter case does not happen too often.) 

As for claim (ii), the result follows immediately from the fact that degree of the numerator in \eqref{eq:Fry} is at most $\dfrac{r+1}{2}.$
\end{proof}

The following lemma and proposition turn the obtained generating functions into Theorem~\ref{Main}.

\begin{lem}
Let $c$ and $k$ be non-negative integers such that $0 \leq c \leq k$. Then,
\[   \binom{L+c-\frac{1}{2}}{k-\frac{1}{2}} 
=  \dfrac{1}{4^L}\binom{2L}{L} R(L), \]
where $R(L)$ is a polynomial in $L$ of degree $k$.
\end{lem}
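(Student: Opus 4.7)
The plan is to pass everything through the Gamma function so that the claimed identity reduces to a transparent telescoping calculation. The starting point is the classical identity
\[
\binom{L-1/2}{L} = \frac{1}{4^L}\binom{2L}{L},
\]
which follows directly from $(1/2)(3/2)\cdots((2L-1)/2) = (2L)!/(4^L L!)$. With this in hand, it suffices to show that the ratio
\[
R(L) := \frac{\binom{L+c-1/2}{k-1/2}}{\binom{L-1/2}{L}}
\]
is a polynomial in $L$ of degree exactly $k$.

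Next I would rewrite each binomial coefficient via Gamma functions,
\[
\binom{L+c-1/2}{k-1/2} = \frac{\Gamma(L+c+1/2)}{\Gamma(k+1/2)\,\Gamma(L+c-k+1)}, \qquad \binom{L-1/2}{L} = \frac{\Gamma(L+1/2)}{L!\,\Gamma(1/2)},
\]
and use the telescoping identities $\Gamma(L+c+1/2) = \Gamma(L+1/2)\prod_{j=0}^{c-1}(L+j+1/2)$ together with $L!/(L+c-k)! = (L+c-k+1)(L+c-k+2)\cdots L$, the latter being a product of $k-c$ linear factors in $L$ (valid for $L\ge k-c$ and extending to all $L$ by the usual polynomial identity argument). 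Substituting these into the ratio yields
\[
R(L) = \frac{\Gamma(1/2)}{\Gamma(k+1/2)}\cdot \prod_{j=0}^{k-c-1}(L-j) \cdot \prod_{j=0}^{c-1}(L+j+1/2).
\]

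Finally, the prefactor $\Gamma(1/2)/\Gamma(k+1/2) = 4^k k!/(2k)!$ is a constant, while the two products are polynomials in $L$ of degrees $k-c$ and $c$ respectively. Hence $R(L)$ is a polynomial in $L$ of total degree $(k-c)+c = k$, which combined with the opening identity gives exactly $\binom{L+c-1/2}{k-1/2} = 4^{-L}\binom{2L}{L}R(L)$. There is no real obstacle here; the only care required is to correctly count the number of terms in each telescoping product so that the degrees add up to $k$, and to justify extending the identity from $L\ge k-c$ to all $L$ by the fact that both sides are rational in $L$ of matching type.
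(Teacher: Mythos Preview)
Your proof is correct and follows essentially the same route as the paper: factor out $\binom{L-1/2}{L}=4^{-L}\binom{2L}{L}$ and show the remaining ratio splits into a degree-$(k-c)$ falling factorial $\prod_{j=0}^{k-c-1}(L-j)$ times a degree-$c$ shifted rising factorial $\prod_{j=0}^{c-1}(L+j+1/2)$. The paper packages these two products as $\binom{L}{k-c}$ and $\binom{2L+2c-1}{2L}/\binom{L+c-1}{L}$ respectively (leaving the identity as an exercise), whereas your Gamma-function telescoping makes the factorization explicit; the arguments are otherwise identical.
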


\begin{proof}
We leave it as an
exercise for the reader to show that
\[   \binom{L+c-\frac{1}{2}}{k-\frac{1}{2}} 
=  \dfrac{1}{4^L}\binom{2L}{L} \left[ \binom{L}{k-c}\dfrac{\binom{2L+2c-1}{2L}}{\binom{L+c-1}{L}}
\cdot \dfrac{\binom{2c-1}{c}}{\binom{2k-1}{k}}\dfrac{4^{k-c}}{\binom{k}{c}}\right]. \]
Then, $\binom{L}{k-c}$ is a polynomial of degree $k-c$ and 
$\dfrac{\binom{2L+2c-1}{2L}}{\binom{L+c-1}{L}}$ can be simplified to a polynomial
of degree $(2c-1)-(c-1) = c.$ The last portion in the bracket is just a constant in $L$.
Thus, the entire bracket is a polynomial of degree~$(k-c)+c=k$. 
\end{proof}

The closed-form formula of the original sum $\dfrac{1}{2^n}\sum_{1\leq i_1 < i_2 < \dots < i_r \leq h}C[Y_{i_1}Y_{i_2}\dots Y_{i_r}]$ is the coefficient of $y^L$ in $F^{(r)}(y)$. Expression \eqref{eq:partial} of Lemma \ref{lem:partial} tells us that $F^{(r)}(y)$ can be decomposed into two fractions in which the denominators have integer exponent and fractional exponent. The proposition below is used to unmask the coefficient of $y^L$ in each fraction, and eventually allows us to establish the claim about the closed-form formula of the original sum in \eqref{eq:sum_CYr}. 

In what follows, we denote by $[y^L]f(y)$ the coefficient of~$y^L$ in $f(y)$.

\begin{prop}
\label{prop:expansion}
Let $k$ be a non-negative integer and $H(y)$ a polynomial of degree~$k$. 

(i) The integer exponent:
\[  [y^L] \dfrac{H(y)}{(1-y)^{k+1}} = Q(L), \]
where $Q(L)$ is a polynomial of degree at most $k$.

(ii) The fractional exponent:   
\[ [y^L] \dfrac{H(y)}{(1-y)^{k+1/2}} = \dfrac{1}{4^L}\binom{2L}{L}P(L), \]
where $P(L)$ is a polynomial of degree at most $k.$
\end{prop}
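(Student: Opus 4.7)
The plan is to handle both parts uniformly by writing $H(y) = \sum_{j=0}^{k} h_j y^j$ and extracting the coefficient of $y^L$ term by term using the generalized binomial series for $(1-y)^{-\alpha}$.

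For part (i), I would invoke the standard identity $[y^m](1-y)^{-(k+1)} = \binom{m+k}{k}$, which is a polynomial of degree $k$ in $m$. Distributing over $H(y)$ gives
\[
[y^L]\dfrac{H(y)}{(1-y)^{k+1}} = \sum_{j=0}^{k} h_j \binom{L-j+k}{k}.
\]
This is a finite linear combination of polynomials of degree $k$ in $L$, hence a polynomial $Q(L)$ of degree at most $k$. The formal extraction and this polynomial agree for every $L \geq 0$ because $\binom{L-j+k}{k}$, regarded as a polynomial in $L$, vanishes at $L = 0, 1, \ldots, j-1$, which is precisely the range on which the corresponding formal coefficient is zero.

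For part (ii), the same approach applies with the generalized binomial series $[y^m](1-y)^{-(k+1/2)} = \binom{m+k-1/2}{k-1/2}$. Distributing gives
\[
[y^L]\dfrac{H(y)}{(1-y)^{k+1/2}} = \sum_{j=0}^{k} h_j \binom{L+(k-j)-\tfrac{1}{2}}{k-\tfrac{1}{2}}.
\]
Each summand now matches the preceding lemma with parameter $c = k - j \in \{0, 1, \ldots, k\}$, so it rewrites as $\dfrac{1}{4^L}\binom{2L}{L} R_j(L)$ for a polynomial $R_j$ of degree $k$. Factoring out the common $\dfrac{1}{4^L}\binom{2L}{L}$ and setting $P(L) := \sum_{j=0}^{k} h_j R_j(L)$ yields the desired closed form, with $P$ of degree at most $k$.

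The main obstacle is essentially nil, since the preceding lemma has already absorbed the heavy algebraic work of converting half-integer binomials into the form $\dfrac{1}{4^L}\binom{2L}{L}\cdot(\text{polynomial})$. The only subtle bookkeeping is to justify term-by-term extraction in the edge case $L < k$: for part (ii) this is handled by observing that $\binom{L-j+k-1/2}{k-1/2}$ vanishes whenever $j > L$ (a Gamma-function pole in the denominator), mirroring the integer-binomial vanishing used in part (i). Hence both identities hold not only asymptotically but for every integer $L \geq 0$.
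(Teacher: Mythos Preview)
Your proposal is correct and follows essentially the same route as the paper: write $H(y)=\sum_j h_j y^j$, use the generalized binomial expansions $[y^m](1-y)^{-(k+1)}=\binom{m+k}{k}$ and $[y^m](1-y)^{-(k+1/2)}=\binom{m+k-1/2}{k-1/2}$, distribute, and in part~(ii) invoke the preceding lemma termwise with $c=k-j$. Your treatment of the edge case $L<k$ via the vanishing of $\binom{L-j+k}{k}$ (respectively the Gamma-pole argument for the half-integer binomial) is in fact slightly more explicit than the paper's, which simply truncates the inner sum at $\min(L,k)$ without further comment.
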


\begin{proof}
Assume $H(y) = \sum_{i=0}^k  a_iy^i$.

(i) Since $ \dfrac{1}{(1-y)^{k+1}} = \sum_L \binom{L+k}{k}y^L,$
we have 
\begin{small}
\[  \dfrac{H(y)}{(1-y)^{k+1}} 
= \sum_{i=0}^k  a_i \sum_{L=0}^{\infty} \binom{L+k}{k}y^{L+i}
= \sum_{i=0}^k  a_i \sum_{L=i}^{\infty} \binom{L-i+k}{k}y^{L}
= \sum_{L=0}^{\infty} \left[ \sum_{i=0}^{\min{(L,k)}}  a_i  \binom{L-i+k}{k} \right] y^{L}. 
\]
\end{small}

The sum inside the bracket is a polynomial (in $L$) of degree at most $k$ as claimed.

(ii) Since $ \dfrac{1}{(1-y)^{k+1/2}} = \sum_L \binom{L+k-\frac{1}{2}}{k-\frac{1}{2}}y^L,$
we have 
\begin{align*}  \dfrac{H(y)}{(1-y)^{k+1/2}} 
&= \sum_{i=0}^k  a_i \sum_{L=0}^{\infty} \binom{L+k-\frac{1}{2}}{k-\frac{1}{2}}y^{L+i} 
= \sum_{i=0}^k  a_i \sum_{L=i}^{\infty} \binom{L-i+k-\frac{1}{2}}{k-\frac{1}{2}} y^{L} \\
&= \sum_{L=0}^{\infty} \left[ \sum_{i=0}^{\min{(L,k)}}  a_i  
\binom{L-i+k-\frac{1}{2}}{k-\frac{1}{2}} \right] y^{L} .
\end{align*}
Applying the previous lemma to each binomial term of the inner sum, the claim holds.
\end{proof}

\section{Detailed derivations of Lemma \ref{lem:partial}(i) for $r=3$\label{sec:appB}}
Consider the original sum
\begin{equation}
\label{eq:inner_r3}
  \sum_{1\leq i < j < k \leq 2L}  
\binom{i-1}{\lf i/2\rf}  \binom{j-i-1}{\lf j/2\rf- \lf i/2 \rf -1} \binom{k-j-1}{\lf k/2\rf- \lf j/2 \rf -1}  \dfrac{1}{2^{k}}.   
\end{equation}

\begin{normalsize}
\begin{table}[h!]
\caption{The original sum decomposition, the corresponding label, the exponents $A, B$, and the compensation factor for the generating function in each case\label{tab:decompose_r3}}
\centering{}%
\begin{tabular}{|c|c|c|c|l|c|c|c|}
\hline 
\multirow{2}{*}{{\small{}Case}} & \multirow{2}{*}{{\small{}$i$}} & \multirow{2}{*}{{\small{}$j$}} & \multirow{2}{*}{{\small{}$k$}} & \multirow{2}{*}{{\small{}Label}} & \multirow{2}{*}{{\small{}$A$}} & \multirow{2}{*}{{\small{}$B$}} & {\small{}Compensation}\tabularnewline
 &  &  &  &  &  &  & {\small{}factor}\tabularnewline
\hline 
{\small{}1} & {\small{}$2s+1$} & {\small{}$2t+2$} & {\small{}$2r+3$} & {\small{}$[0,0,-1]$} & {\small{}$1$} & {\small{}$2$} & {\small{}$y^{2}$}\tabularnewline
\hline 
{\small{}2} & {\small{}$2s+1$} & {\small{}$2t+2$} & {\small{}$2r+4$} & {\small{}$[0,0,1]$} & {\small{}$1$} & {\small{}$1$} & {\small{}$y^{2}$}\tabularnewline
\hline 
{\small{}3} & {\small{}$2s+1$} & {\small{}$2t+3$} & {\small{}$2r+4$} & {\small{}$[0,1,0]$} & {\small{}$1$} & {\small{}$1$} & {\small{}$y^{2}$}\tabularnewline
\hline 
{\small{}4} & {\small{}$2s+1$} & {\small{}$2t+3$} & {\small{}$2r+5$} & {\small{}$[0,1,1]$} & {\small{}$1$} & {\small{}$2$} & {\small{}$y^{3}$}\tabularnewline
\hline 
{\small{}5} & {\small{}$2s+2$} & {\small{}$2t+3$} & {\small{}$2r+4$} & {\small{}$[1,-1,0]$} & {\small{}$0$} & {\small{}$3$} & {\small{}$y^{2}$}\tabularnewline
\hline 
{\small{}6} & {\small{}$2s+2$} & {\small{}$2t+3$} & {\small{}$2r+5$} & {\small{}$[1,-1,1]$} & {\small{}$0$} & {\small{}$4$} & {\small{}$y^{3}$}\tabularnewline
\hline 
{\small{}7} & {\small{}$2s+2$} & {\small{}$2t+4$} & {\small{}$2r+5$} & {\small{}$[1,1,-1]$} & {\small{}$0$} & {\small{}$4$} & {\small{}$y^{3}$}\tabularnewline
\hline 
{\small{}8} & {\small{}$2s+2$} & {\small{}$2t+4$} & {\small{}$2r+6$} & {\small{}$[1,1,1]$} & {\small{}$0$} & {\small{}$3$} & {\small{}$y^{3}$}\tabularnewline
\hline
\end{tabular}
\end{table}
\end{normalsize}

Depending on the parity of the indices $i, j, k$, the original sum in \eqref{eq:inner_r3} can be decomposed into eight cases as shown in Table \ref{tab:decompose_r3}. The column ``Label'' specifies the order in which the summation identities $(A[0])$, $(A[1])$, or $(A[-1])$ of Proposition \ref{prop:three_identities} must be applied when evaluating the nested sum. Recall that the label is read and applied in order from right to left. Our goal is to compute the generating function of \eqref{eq:inner_r3}, i.e. $F^{(3)}(y)$ which can be done by applying the generating function and evaluating the nested sum of each case separately. We look at a couple of examples now.\\* 

For case 1 ($i=2s+1$,~$j=2t+2$,~$k=2r+3$),
\begin{footnotesize}
\begin{align*}
\sum_{L=0}^{\infty}\left(\sum_{s=0}^{L-1}  \sum_{t=s}^{L-1}  \sum_{r=t}^{L-2} 
\binom{2s}{s}  \binom{2t-2s}{t-s} \binom{2r-2t}{r-t -1}  \dfrac{1}{2^{2r+3}}\right)y^L 
&= \frac{1}{8}\cdot \sum_{0\leq s\leq t\leq r\leq L-2} 
\binom{2s}{s}  \binom{2t-2s}{t-s} \binom{2r-2t}{r-t -1}  \dfrac{1}{4^{r}}\sum_{L=r+2}^{\infty}y^L  \\ 
&= \frac{1}{8}\cdot \frac{y^2}{1-y} \sum_{0\leq s\leq t} 
\binom{2s}{s}  \binom{2t-2s}{t-s} \underbrace{\sum_{r=t}^{\infty} \binom{2r-2t}{r-t -1} \left(\frac{y}{4}\right)^r}_{(A[-1])}\\
&=\frac{1}{8}\cdot \frac{y^2}{1-y} \sum_{0\leq s\leq t} 
\binom{2s}{s}  \binom{2t-2s}{t-s} \frac{\left(1-\sqrt{1-y}\right)^2}{y\sqrt{1-y}}\left(\frac{y}{4}\right)^t\\
&=\frac{1}{8}\cdot \frac{y^2}{1-y} \frac{\left(1-\sqrt{1-y}\right)^2}{y\sqrt{1-y}}\sum_{0\leq s} \binom{2s}{s} \underbrace{\sum_{t=s}^{\infty} \binom{2t-2s}{t-s} \left(\frac{y}{4}\right)^{t-s}}_{(A[0])} \left(\frac{y}{4}\right)^s\\
&=\frac{1}{8}\cdot \frac{y^2}{1-y} \frac{\left(1-\sqrt{1-y}\right)^2}{y\sqrt{1-y}}\frac{1}{\sqrt{1-y}}
\underbrace{\sum_{s=0}^\infty \binom{2s}{s} \left(\frac{y}{4}\right)^s}_{(A[0])}\\
&=\frac{1}{8}\cdot \frac{y^2}{1-y} \frac{\left(1-\sqrt{1-y}\right)^2}{y\sqrt{1-y}}\frac{1}{\sqrt{1-y}}\frac{1}{\sqrt{1-y}}\\ 
&= \frac{y\left(1-\sqrt{1-y}\right)^2}{2^3\left(1-y\right)^{1+3/2}}.
\end{align*}
\end{footnotesize}

We can see that case 1 above leads to the exponents $A=1$ and $B=2$, and the denominator satisfies what is stated in \eqref{eq:Fry}. Taking a closer look at how we calculated the above sum, we arrive at the following shortcut method. 
\begin{footnotesize}
\[
\sum_{L=0}^{\infty}\left(\sum_{s=0}^{L-1}  \sum_{t=s}^{L-1}  \sum_{r=t}^{\boxed{L-2}} 
\underbrace{\binom{2s}{s}}_{\circled{IV}(A[0])} \underbrace{\binom{2t-2s}{t-s}}_{\circled{III}(A[0])}
\underbrace{\binom{2r-2t}{r-t -1}}_{\circled{II}(A[-1])}
\dfrac{1}{2^{2r+3}}\right) \underbrace{y^L}_{\circled{I}}
=\frac{1}{8}\cdot \frac{\boxed{y^2}}{\underbrace{1-y}_{\circled{I}}}
\underbrace{\frac{\left(1-\sqrt{1-y}\right)^2}{y\sqrt{1-y}}}_{\circled{II}}
\underbrace{\frac{1}{\sqrt{1-y}}}_{\circled{III}}
\underbrace{\frac{1}{\sqrt{1-y}}}_{\circled{IV}}.
\]
\end{footnotesize}
\begin{enumerate}
    \item The compensation factor $y^2$ (the term in the box) is obtained by looking at the upper bound $L-2$ of the sum index $r$. The bounds of the sum indices $s$ and $t$ are irrelevant. \vspace{-0.5em}
    \item The term $\frac{1}{1-y}$ is due to the sum with respect to the index $L$. \vspace{-0.5em}
    \item By the summation identities, each of the sums with respect to indices $r,t,s$ contributes to the additional factor of $\dfrac{1}{\sqrt{1-y}}$. \vspace{-0.5em}
    \item The initial compensation factor $y^2$ in the numerator is cancelled with the factor $y$ in the denominator due to $(A[-1])$.  \vspace{-0.5em}
\end{enumerate}

Let us use the shortcut method to evaluate case 7 ($i=2s+2$, $j=2t+4$, $k=2r+5$).
\begin{footnotesize}
\begin{align*}
\sum_{L=0}^{\infty}&\left(\sum_{s=0}^{L-1}  \sum_{t=s}^{L-2}  \sum_{r=t}^{\boxed{L-3}} 
\underbrace{\binom{2s+1}{s}}_{\circled{IV}(A[1])} \underbrace{\binom{2t-2s+1}{t-s}}_{\circled{III}(A[1])}
\underbrace{\binom{2r-2t}{r-t -1}}_{\circled{II}(A[-1])}
\dfrac{1}{2^{2r+5}}\right) \underbrace{y^L}_{\circled{I}}\\
&=\frac{1}{32}\cdot \frac{\boxed{y^3}}{\underbrace{1-y}_{\circled{I}}}
\underbrace{\frac{\left(1-\sqrt{1-y}\right)^2}{y\sqrt{1-y}}}_{\circled{II}}
\underbrace{\frac{2\left(1-\sqrt{1-y}\right)}{\sqrt{1-y}}}_{\circled{III}}
\underbrace{\frac{2\left(1-\sqrt{1-y}\right)}{\sqrt{1-y}}}_{\circled{IV}}
= \frac{y^0\left(1-\sqrt{1-y}\right)^4}{2^3\left(1-y\right)^{1+3/2}}.
\end{align*}
\end{footnotesize}

Thus, the exponents for this case are $A=0$ and $B=4$, respectively. We further observe that a fraction is put in lowest terms by cancelling out the common factor of the the initial denominator $32$ with the numerator $2\cdot2$. The only way to obtain the factor $2$ in the numerator is when the identity $(A[1])$ is applied. The reader is encouraged to verify that all these cases have the same factor $\frac{1}{8}$ in the final results. Table \ref{tab:decompose_r3} also provides full information of the label, the exponents $A, B$ as well as the compensation factor of the final generating function for all cases. Note also that the value of the exponent $A$ only depends on the parity of the index $i$ w.r.t. the outermost sum in \eqref{eq:inner_r3}. If $i$ is odd then $A=1$, else $A=0$.

Using the above analysis, the three facts we stated in the proof of Lemma \ref{lem:partial}(i) for general $F^{(r)}(y)$ can be verified easily. 

\end{document}